\newcommand{\ol}{\overline}
\newcommand{\cL}{\mathcal L}
\newcommand{\F}{\mathbb F}
\newcommand{\Z}{\mathbb Z}
\newcommand{\Frat}{\phi} 
\newcommand{\gen}[1]{{\langle{#1}\rangle}}
\newcommand{\GL}{\mathrm{GL}}
\newcommand{\Defn}[1]{\textcolor{blue}{\textit{#1}}}
\newcommand{\abs}[1]{\lvert#1\rvert}
\newcommand{\autGM}{{\Aut_M}(G^*)}
\DeclareMathOperator{\Aut}{Aut}
\DeclareMathOperator{\Stab}{Stab}
\DeclareMathOperator{\Comp}{Comp}
\DeclareMathOperator{\im}{im}
\DeclareMathOperator{\id}{id}
\theoremstyle{plain}
  \newtheorem{theorem}{Theorem}
  \newtheorem{lemma}[theorem]{Lemma}
  \newtheorem*{algo*}{Algorithm}
\theoremstyle{definition}
  \newtheorem{remark}[theorem]{Remark}
\title{The construction of finite solvable groups \\ revisited}
\author{Bettina Eick \and Max Horn}
\begin{document}

\maketitle

\begin{abstract}
We describe a new approach towards the systematic construction of finite
groups up to isomorphism. This approach yields a practical algorithm for
the construction of finite solvable groups up to isomorphism. We report
on a GAP implementation of this method for finite solvable groups and
exhibit some sample applications.
\end{abstract}

\section{Introduction}

The construction of all groups of a given order is an old and fundamental 
topic in finite group theory. Given an order $n$, the aim is to determine 
a list of groups of order $n$ so that every group of order $n$ is isomorphic 
to exactly one group in the list. There are many contributions to this topic
in the literature. In the early history these are based on hand 
calculations; in more recent years algorithms have been developed for this 
purpose. We refer to \cite{BEO02} for a historic overview and a survey of
the available algorithms.

Modern group construction algorithms  distinguish three cases: nilpotent 
groups, solvable groups and non-solvable groups. Nilpotent groups are 
determined as direct products of $p$-groups and $p$-groups can be constructed 
using the $p$-group generation algorithm \cite{OBr90}. Solvable groups can 
be determined by the Frattini extension method \cite{BEi99} or the cyclic 
split extensions methods \cite{BEi00}. Non-solvable groups can be obtained 
via cyclic extensions of perfect groups as in \cite{BEi99} or via the 
method in \cite{Arc98}.

The $p$-group generation algorithm has been used to determine the groups
of order dividing $2^9$ \cite{EOB99} and the construction of the groups of 
order dividing $p^7$ for all primes $p$ is also based on it, see \cite{NOV04} 
and \cite{OVL05}. The algorithm can also be used to determine groups with 
special properties; for example, it is a main tool in the investigation of 
finite $p$-groups by coclass, see \cite{LGM02} for background, and in the
construction of restricted Burnside groups, see \cite{NOb96} and \cite{OVa02}. 
The $p$-group generation algorithm reduces the isomorphism problem to an 
orbit-stabilizer calculation.

The Frattini extension method and the cyclic split extension method have
been used to determine most solvable non-nilpotent groups of order at most 
2000, see \cite{BEO02}. The cyclic split extension method applies to groups
with normal Sylow subgroup and order of the form $p^n \cdot q$ for different 
primes $p$ and $q$ only, while the Frattini extension method applies to all 
non-nilpotent solvable groups. The Frattini extension method uses a random 
isomorphism test for the reduction to isomorphism types. 

The central aim of this paper is to introduce a new approach towards the
systematic construction of groups up to isomorphism. This new approach is
particularly useful for finite solvable groups and we developed it in
detail and implemented it in GAP \cite{GAP} for this case. For finite 
$p$-groups, our new approach coincides with the approach of $p$-group 
generation. In particular, the new approach reduces the solution of the 
isomorphism problem to an orbit-stabilizer calculation.

We discuss some sample applications of our new approach. We determined 
(again) all solvable non-nilpotent groups of order at most 2000 and thus 
check the results available in the Small Groups Library \cite{BEO00}
and we constructed (for the first time) the groups of order 
$2304 = 3^2 \cdot 2^8$. We believe that our new approach could also be
useful in the experimental investigation of coclass theory for finite
solvable groups as suggested in \cite{HPl93} or in the construction of
other finite solvable groups with special properties.

\section{The general approach of the algorithm}

In this section we exhibit a top-level introduction towards our new approach.
The central idea is to use induction along a certain series: the so-called 
\Defn{$F$-central series}. We first introduce and investigate this series. 
Throughout this section, let $G$ be a finite group.

Recall that the \Defn{Fitting subgroup} $F(G)$ is the maximal nilpotent 
normal subgroup of $G$. Define $\nu_0(G) = F(G)$ and let $\nu_{i+1}(G)$ 
be the smallest normal subgroup of $F(G)$ so that $\nu_i(G)/\nu_{i+1}(G)$ 
is a direct product of elementary abelian groups which is centralized by 
$F(G)$. Then we define the \Defn{$F$-central series} of $G$ as
\[ G \geq \nu_0(G) \geq \nu_1(G) \geq \ldots. \]

The following lemma provides an alternative characterization of the terms
of the $F$-central series. We omit its straightforward proof. For an 
integer $n$ with prime factorisation $n = p_1^{e_1} \cdots p_r^{e_r}$ 
for different primes $p_1, \ldots, p_r$ and exponents $e_1, \ldots, e_r 
\neq 0$, we call $p_1 \cdots p_r$ the \Defn{core} of $n$.

\begin{lemma} \label{altchar}
Let $G$ be a finite group and let $k$ be the core of $|F(G)|$. Then
\[ \nu_{i+1}(G) = [F(G), \nu_i(G)] \nu_i(G)^k 
   \;\; \text{ for each }\;\; i \geq 0.\]
\end{lemma}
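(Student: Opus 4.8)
The plan is to prove the displayed identity by showing that the right-hand side is precisely the smallest normal subgroup of $F(G)$ meeting the two defining requirements on the quotient. Write $F = F(G)$, $M = \nu_i(G)$ and $N = [F,M]\,M^k$. By induction on $i$ each $\nu_i(G)$ is characteristic in $G$ (both $[F,\cdot]$ and $(\cdot)^k$ preserve characteristic subgroups), so $M \trianglelefteq F$, and then $[F,M]\trianglelefteq F$ and $M^k \trianglelefteq F$; hence $N$ is a normal (in fact characteristic) subgroup of $F$ with $N \le M$. Before comparing, I would record that the admissible subgroups are closed under intersection: a subgroup of a direct product of elementary abelian groups centralized by $F$ is again of this form, so the phrase ``smallest normal subgroup'' is well-defined and the minimality argument below is legitimate.

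First I would verify that $N$ itself is admissible, which gives $\nu_{i+1}(G) \le N$ by minimality. Since $M \le F$ we have $[M,M] \le [F,M] \le N$, so $M/N$ is abelian; and $M^k \le N$ forces every element of $M/N$ to have order dividing the squarefree integer $k$. A finite abelian group of squarefree exponent is the direct product of its Sylow subgroups, each of which is elementary abelian, so $M/N$ is a direct product of elementary abelian groups. Finally $[F,M] \le N$ says exactly that $F$ centralizes $M/N$. Thus $N$ satisfies both conditions and $\nu_{i+1}(G) \le N$.

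For the reverse inclusion I would take any $L \trianglelefteq F$ with $L \le M$ such that $M/L$ is a direct product of elementary abelian groups centralized by $F$, and show $N \le L$. That $F$ centralizes $M/L$ means $[F,M] \le L$. The primes occurring in the decomposition of $M/L$ all divide $|M|$, hence $|F|$, so the exponent of $M/L$ divides $k$; consequently $x^k \in L$ for every $x \in M$, i.e.\ $M^k \le L$. Together these give $N = [F,M]\,M^k \le L$, and applying this with $L = \nu_{i+1}(G)$ yields $N \le \nu_{i+1}(G)$. Combined with the previous paragraph this proves $\nu_{i+1}(G) = N$.

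The only genuinely non-formal point is the equivalence between being a direct product of elementary abelian groups and being abelian of squarefree exponent dividing $k$; once this is isolated, both inclusions reduce to the elementary observations $[M,M]\le[F,M]$ and the characteristicity of $[F,M]$ and $M^k$. I expect this exponent bookkeeping, together with the closure-under-intersection remark that makes ``smallest'' meaningful, to be the part most worth stating carefully, while everything else is routine.
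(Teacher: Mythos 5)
Your proof is correct: you verify that $N=[F(G),\nu_i(G)]\,\nu_i(G)^k$ is itself an admissible normal subgroup (using that an abelian group of squarefree exponent is a direct product of elementary abelian groups) and that every admissible subgroup contains $N$, which together with the closure-under-intersection remark also justifies the word ``smallest'' in the definition. The paper explicitly omits the proof of this lemma as straightforward, so there is nothing to compare against; what you have written is precisely the routine verification the authors had in mind, with no gaps.
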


If $F(G)$ is a finite $p$-group, then the series $F(G) = \nu_0(G) \geq 
\nu_1(G) \geq \ldots$ coincides with the lower exponent-$p$ central series 
of $F(G)$. In general, the group $F(G)$ is nilpotent and the series $F(G) = 
\nu_0(G) \geq \nu_1(G) \geq \ldots$ is a central series of $F(G)$. Thus
there exists an integer $c$ which $\nu_c(G) = \{1\}$. We call the smallest 
such integer the \Defn{$F$-class} of $G$. Further, if $G$ has $F$-class at
least $1$, then the order of the quotient $\nu_0(G)/\nu_1(G)$ is called the 
\Defn{$F$-rank} of $G$. The next two lemmas collect some elementary facts 
about the $F$-central series. Let $\Frat(G)$ denote the Frattini subgroup 
of $G$.

\begin{lemma} \label{nu-properties1}
Let $G$ be a finite group. 
\begin{enumerate}
 \item $\nu_1(G) = \Frat(F(G)) \leq \Frat(G)$.
 \item $\nu_i(G)$ is a characteristic subgroup of $G$ for each $i \geq 0$.
 \item $\nu_j(G/\nu_i(G)) = \nu_j(G)/\nu_i(G)$ for each $i \geq j \geq 0$ and $i\geq 1$.
\end{enumerate}
\end{lemma}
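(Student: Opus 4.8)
The plan is to handle the three parts in turn, in each case reducing to Lemma~\ref{altchar} together with standard properties of the Frattini and Fitting subgroups.

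For (a), I would apply Lemma~\ref{altchar} with $i=0$ to get $\nu_1(G)=[F(G),F(G)]\,F(G)^k=F(G)'\,F(G)^k$. Writing the nilpotent group $F(G)$ as the direct product $P_1\times\cdots\times P_r$ of its Sylow subgroups, one has $F(G)'=\prod_i P_i'$; and since $k=p_1\cdots p_r$, the map $x\mapsto x^{p_j}$ permutes $P_i$ for each $j\neq i$, so $F(G)^k=\prod_i P_i^{p_i}$. Hence $\nu_1(G)=\prod_i P_i'P_i^{p_i}=\prod_i\Frat(P_i)=\Frat(F(G))$, by the usual description of the Frattini subgroup of a $p$-group. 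For the inclusion I would invoke the general fact that $\Frat(H)\le\Frat(G)$ whenever $H\trianglelefteq G$: if a maximal subgroup $M$ of $G$ did not contain $\Frat(H)$, then $\Frat(H)M=G$, so Dedekind's law gives $H=\Frat(H)(H\cap M)$, and the non-generator property of $\Frat(H)$ forces $H=H\cap M\le M$, a contradiction. Taking $H=F(G)$ completes (a).

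Part (b) is a routine induction on $i$ via Lemma~\ref{altchar}: the base case holds since $F(G)$ is characteristic, and in the step any automorphism of $G$ fixes $F(G)$ and, inductively, $\nu_i(G)$, hence fixes $[F(G),\nu_i(G)]$ and $\nu_i(G)^k$ and therefore their product $\nu_{i+1}(G)$ (note that $k$ depends only on $|F(G)|$, hence is an invariant of $G$). For (c), set $N=\nu_i(G)$ with $i\ge 1$; then $N\le\nu_1(G)\le\Frat(G)$ by (a). The crux is the identity $F(G/N)=F(G)/N$, for which it suffices to show that a normal subgroup $K$ of $G$ with $N\le K$ and $K/N$ nilpotent is itself nilpotent. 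I would argue as follows: for a Sylow $p$-subgroup $P$ of $K$, the image $PN/N$ is the unique, hence characteristic, Sylow $p$-subgroup of the nilpotent group $K/N$; as $G$ permutes the Sylow subgroups of $K\trianglelefteq G$, this uniqueness forces $PN\trianglelefteq G$. The Frattini argument in $G$ then gives $G=N_G(P)\,PN=N_G(P)N$, and since $N\le\Frat(G)$ the non-generator property yields $G=N_G(P)$, i.e.\ $P\trianglelefteq G$. As this holds for every Sylow subgroup, $K$ is nilpotent.

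With $F(G/N)=F(G)/N$ in hand, I would propagate the claim along the series. Since $N\le\nu_1(G)=\Frat(F(G))$, the quotient $F(G)/N$ maps onto $F(G)/\Frat(F(G))$, which is divisible by every prime dividing $|F(G)|$; hence $|F(G)/N|$ and $|F(G)|$ have the same core $k$. Now I induct on $j$ from $0$ to $i$: the case $j=0$ is the identity above, and for the step I substitute $\nu_j(G/N)=\nu_j(G)/N$ and $F(G/N)=F(G)/N$ into Lemma~\ref{altchar}, apply the quotient identities $[A/N,B/N]=[A,B]N/N$ and $(A/N)^k=A^kN/N$, and use $N\le\nu_{j+1}(G)$ (valid because $j+1\le i$) to rewrite the outcome as $\nu_{j+1}(G)/N$. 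The step I expect to be the main obstacle is precisely the identity $F(G/N)=F(G)/N$: the tempting shortcut of working inside $K$ fails, because $N\le\Frat(G)$ does not imply $N\le\Frat(K)$, so one is forced to run the Frattini argument in $G$ and invoke the non-generator property there, which is exactly where the hypothesis $N\le\Frat(G)$, equivalently $i\ge 1$, is used.
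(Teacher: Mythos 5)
Your proof is correct and follows essentially the same route as the paper: the paper likewise reduces (c) to the identity $F(G/\nu_i(G)) = F(G)/\nu_i(G)$, established via $\nu_i(G) \le \Frat(G) \le F(G)$ and Gasch\"utz' theorem (which you prove inline by the Frattini argument and the non-generator property rather than citing), followed by induction on $j$ using Lemma~\ref{altchar}. Parts (a) and (b), which the paper dismisses as elementary, are filled in correctly, as is the invariance of the core of $|F(G)|$ when passing to $G/\nu_i(G)$, a point the paper leaves implicit.
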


\begin{proof}
(a) and (b) are elementary and we consider (c) only. 
Suppose $j = 0$, then we need to show $F(G/\nu_i(G))=F(G)/\nu_i(G)$.
Let $L\leq G$ be defined by $L/\nu_i(G)=F(G/\nu_i(G))$.
As $F(G)/\nu_i(G)$ is a nilpotent normal subgroup of $G/\nu_i(G)$,
it follows that $F(G)\leq L$.
Moreover, since  $i\geq 1$ it follows together with (a) that $\nu_i(G) \leq \Frat(G)\leq F(G)\leq L$.
By construction $L/\nu_i(G)$ is nilpotent, hence $L/\phi(G)$ is also nilpotent.
Additionally, $L$ is finite and normal in $G$, thus $L$ is nilpotent by Gasch\"utz' theorem (see \cite[5.2.15]{Rob82}).
In summary we conclude $L\leq F(G)$, and in fact $L=F(G)$, proving the claim.
For $j > 0$ the result follows by induction using Lemma \ref{altchar}.
\end{proof}

\begin{lemma} \label{nu-properties2}
Let $G$ be a finite group. 
\begin{enumerate}
 \item $G$ is solvable if and only if $G/\nu_0(G)$ is solvable.
 \item $G$ is nilpotent if and only if $G/\nu_0(G)$ is trivial.
 \item If $G \neq \{1\}$ is solvable, then $\nu_0(G) \neq \{1\}$ and hence $G$ 
       has $F$-class at least $1$.
\end{enumerate}
\end{lemma}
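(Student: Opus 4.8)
The plan is to prove the three statements of Lemma~\ref{nu-properties2}, which all concern how solvability and nilpotency of $G$ relate to properties of $\nu_0(G) = F(G)$ and the quotient $G/\nu_0(G)$. The key facts I would lean on are that $F(G)$ is by definition nilpotent (hence solvable), and that solvability and nilpotency behave well under extensions and quotients.

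For part (a), I would argue both directions using the standard fact that solvability is closed under extensions and quotients. If $G$ is solvable, then so is its quotient $G/\nu_0(G)$, which is the easy direction. Conversely, if $G/\nu_0(G)$ is solvable, then since $\nu_0(G) = F(G)$ is nilpotent and therefore solvable, the group $G$ is an extension of a solvable group by a solvable group, hence solvable. For part (b), the forward direction is immediate: if $G$ is nilpotent, then $F(G) = G$, so $G/\nu_0(G)$ is trivial. For the converse, if $G/\nu_0(G)$ is trivial then $G = F(G)$, and $F(G)$ is nilpotent by definition, so $G$ is nilpotent.

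For part (c), the claim is that a nontrivial solvable group $G$ has $\nu_0(G) = F(G) \neq \{1\}$. I would use the well-known fact that in a nontrivial solvable group the Fitting subgroup is nontrivial; concretely, the last nontrivial term of the derived series is abelian and normal, hence a nontrivial nilpotent normal subgroup, so it is contained in $F(G)$, forcing $F(G) \neq \{1\}$. The conclusion that $G$ then has $F$-class at least $1$ follows directly from the definition of $F$-class, since $\nu_0(G) \neq \{1\}$ means the series does not reach $\{1\}$ at index $0$.

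I do not expect any genuine obstacle here; all three parts reduce to standard closure properties of the solvable and nilpotent classes together with the defining property of the Fitting subgroup as the largest nilpotent normal subgroup. The only point requiring a little care is invoking the correct direction of "extension of solvable by solvable is solvable" in part~(a), and recalling that $F(G)$ is both normal and nilpotent so that parts (b) and (c) follow cleanly from its definition.
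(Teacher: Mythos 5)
Your proof is correct and fills in, with standard closure properties of solvable and nilpotent groups plus the nontriviality of the Fitting subgroup of a nontrivial solvable group, exactly the details the paper leaves implicit when it says all three items "follow directly from the fact that $\nu_0(G) = F(G)$ is the maximal nilpotent normal subgroup of $G$." The approach is the same; no issues.
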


\begin{proof}
All three items follow directly from the fact that $\nu_0(G) = F(G)$ is 
the maximal nilpotent normal subgroup of $G$. 
\end{proof}

\begin{remark}
\label{frank0}
Let $G$ be a finite group of $F$-class $0$. Then $G$ does not have any
non-trivial solvable normal subgroup. The structure of such groups and 
their construction up to isomorphism is well understood, see 
\cite[p. 89f]{Rob82} for details. 
\end{remark}

Let $G$ be a finite group of $F$-class $c$. A group $H$ is a 
\Defn{descendant} of $G$ (and $G$ is the \Defn{ancestor} of $H$) 
if $H$ has $F$-class $c+1$ and $H / \nu_c(H) \cong G$.  Lemma 
\ref{nu-properties1} asserts that $G$ and $H$ have the same $F$-rank.

Our approach to construct finite groups uses the $F$-class and 
the $F$-rank as primary invariants. Its two central ingredients are the 
following algorithms.

\begin{enumerate}
\item {\bf Algorithm I} \\
Given a positive integer $\ell$, determine up to isomorphism all 
finite groups of $F$-class 1 and $F$-rank $\ell$.
\item {\bf Algorithm II} \\
Given a finite group $G$, determine up to isomorphism all 
descendants of $G$.
\end{enumerate}

Algorithm I and an iterated application of Algorithm II thus yield a
method to determine up to isomorphism all finite groups of $F$-class
$c > 1$ and $F$-rank $\ell$. If Algorithm I is restricted to determine
solvable groups only, then this approach yields an algorithm to determine
up to isomorphism all finite solvable groups of $F$-class $c$ and 
$F$-rank $\ell$, as the following remark asserts.

\begin{remark}
Let $H$ be a descendant of $G$. Then $G$ is solvable if and only if
$H$ is solvable.
\end{remark}

In the following Sections \ref{fclass1} and \ref{fdesc} we discuss 
methods for Algorithm I and II in more detail. In both cases we are 
particularly interested in the construction of solvable groups. We
then briefly describe the GAP implementation of our method in Section
\ref{impl} and we exhibit applications in Section \ref{applics}.

\section[Finite groups of F-class 1]{Finite groups of $F$-class 1}
\label{fclass1}

In this section we describe an effective method to determine up to 
isomorphism the finite groups $G$ of $F$-class 1 and given $F$-rank $\ell$. 
Every such group $G$ has a Fitting subgroup $F(G)$ which is a direct product 
of elementary abelian groups and has order $\ell$. The following lemma analyzes 
the structure of such groups $G$ further.

\begin{lemma}
\label{fitting}
Let $G$ be a finite group of $F$-class $1$ and $F$-rank $\ell$. Let
$H$ be the maximal solvable normal subgroup of $G$. 
\begin{enumerate}
\item
$F(G)$ is a direct product of elementary abelian groups and has order $\ell$.
\item
$H$ is solvable of $F$-class $1$ and $F$-rank $\ell$ with $F(G) = F(H)$.
\item
$F(H)$ is self-centralizing in $H$, i.e. $C_H(F(H))=F(H)$.
\end{enumerate}
\end{lemma}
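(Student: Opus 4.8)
The plan is to treat the three parts in order, since each feeds into the next. For part (a) I would start from Lemma~\ref{nu-properties1}(a), which gives $\nu_1(G) = \Frat(F(G))$. As $G$ has $F$-class $1$, we have $\nu_1(G) = \{1\}$ and hence $\Frat(F(G)) = \{1\}$. Now $F(G)$ is a finite nilpotent group, so it is the direct product of its Sylow subgroups $P_1 \times \cdots \times P_r$, and the Frattini subgroup distributes over this decomposition as $\Frat(F(G)) = \Frat(P_1) \times \cdots \times \Frat(P_r)$. Since each $\Frat(P_i) = P_i' P_i^{p_i}$ must vanish, each $P_i$ is elementary abelian, so $F(G)$ is a direct product of elementary abelian groups; its order equals the $F$-rank $\ell$ because $\nu_1(G) = \{1\}$ forces $|\nu_0(G)/\nu_1(G)| = |F(G)|$.

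For part (b) the crux is the identity $F(G) = F(H)$, after which the remaining assertions follow as in part (a). I would argue by a double inclusion. Since $F(G)$ is nilpotent and normal in $G$, it is in particular solvable and normal, hence contained in the maximal solvable normal subgroup $H$; being nilpotent and normal in $H$, it then lies in $F(H)$. Conversely, $H$ is the solvable radical of $G$ and therefore characteristic in $G$, so $F(H)$ is characteristic in $H$ and thus normal in $G$; being also nilpotent, $F(H) \leq F(G)$. This yields $F(G) = F(H)$. Consequently $\nu_1(H) = \Frat(F(H)) = \Frat(F(G)) = \{1\}$, while $F(H) = F(G) \neq \{1\}$ (as $G$ has $F$-class exactly $1$), so $H$ has $F$-class $1$ and $F$-rank $|F(H)| = |F(G)| = \ell$; by construction $H$ is solvable.

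For part (c) I would invoke the classical fact that in a solvable group $S$ the Fitting subgroup is self-centralizing in the sense that $C_S(F(S)) = Z(F(S))$ (see \cite{Rob82}). Applying this to the solvable group $H$ from part (b) gives $C_H(F(H)) = Z(F(H))$. By parts (a) and (b), $F(H) = F(G)$ is a direct product of elementary abelian groups and in particular abelian, whence $Z(F(H)) = F(H)$ and therefore $C_H(F(H)) = F(H)$. The one genuinely nontrivial ingredient, and the step I expect to be the main obstacle, is this centralizer theorem: it is exactly where the solvability of $H$ is used, since for non-solvable $G$ the inclusion $C_H(F(H)) \leq F(H)$ can fail. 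This is precisely the reason for passing to the maximal solvable normal subgroup $H$ before asserting~(c).
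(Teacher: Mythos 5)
Your proposal is correct and follows essentially the same route as the paper: part (a) from $\nu_1(G)=\{1\}$ (the paper reads the elementary abelian structure directly off the definition of $\nu_1$, while you recover it via $\Frat(F(G))=\{1\}$ and the Sylow decomposition, a harmless variation), part (b) by the same double inclusion using that $H$ is characteristic and $F(G)$, $F(H)$ are maximal nilpotent normal subgroups, and part (c) by the same classical theorem \cite[5.4.4]{Rob82} on the self-centralizing Fitting subgroup of a solvable group, combined with the abelianness of $F(H)$.
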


\begin{proof}
(a) As $G$ is $F$-class 1, it follows that $\nu_1(G) = \{1\}$ and thus 
$F(G) = \nu_0(G)$ is a direct product of elementary abelian groups and has 
order $\ell$. \\
(b) As $H$ is characteristic in $G$, it follows that $F(H)$ is normal in
$G$. As $F(H)$ is nilpotent and $F(G)$ is the maximal nilpotent normal
of $G$, this yields $F(H) \subseteq F(G)$. Conversely, $F(G) \leq H$ and
thus $F(G)$ is a nilpotent normal subgroup of $H$. Hence $F(G) \leq F(H)$.\\
(c) This follows from \cite[5.4.4]{Rob82}.
\end{proof}

Hence a finite group $G$ of $F$-class $1$ and $F$-rank $\ell$ can be 
constructed from a finite solvable normal subgroup $H$ of $F$-class $1$ 
and $F$-rank $\ell$ and a quotient $G/H$ of $F$-class $0$. We discuss the
effective construction of the solvable groups of $F$-class $1$ in more 
detail in the following.

\subsection{The solvable case}

Let $G$ be a finite solvable group of $F$-rank $1$ and $F$-class $\ell$.
Then 
Lemma \ref{fitting}(c) asserts $G$ can be written as an extension of $F(G)$
by a solvable subgroup $U \leq \Aut(F(G))$. The isomorphism type of the 
group $F(G)$ is fully determined by $\ell$. We discuss the construction of 
the relevant subgroups $U$ of $\Aut(F(G))$ in the following Section 
\ref{relsubs} and the determination of the corresponding extensions of 
$F(G)$ by $U$ in Section \ref{relexts}. In Section \ref{algoI} we summarize 
the resulting algorithm to construct finite solvable groups of $F$-class 1.

\subsubsection{The relevant subgroups}
\label{relsubs}

Let $A$ be the direct product of elementary abelian groups with $\abs{A}=\ell$. 
Our aim is to determine up to conjugacy those solvable subgroups $U$ of 
$\Aut(A)$ so that there exists an extension $G$ of $A$ by $U$ with $F(G) 
\cong A$. We first investigate these subgroups in more detail.

A subgroup $N$ of $\Aut(A)$ centralizes a series through $A$ if there 
exists an $N$-invariant series $A = A_1 > A_2 > \ldots > A_l > A_{l+1}
= \{1\}$ so that $N$ induces the identity on every quotient $A_i/A_{i+1}$.
We say that a group $U \leq \Aut(A)$ is \Defn{$F$-relevant} if none of 
its non-trivial normal subgroups centralizes a series through $A$.

\begin{lemma} \label{relevant}
Let $A$ be the direct product of elementary abelian groups with $\abs{A}=\ell$
and let $U \leq \Aut(A)$. Then the following are equivalent:
\begin{enumerate}
\item \label{item:not-s-cent-a}
$U$ is $F$-relevant.
\item \label{item:not-s-cent-b}
Every extension $G$ of $A$ by $U$ satisfies $F(G) \cong A$.
\item \label{item:not-s-cent-c}
There exists an extension $G$ of $A$ by $U$ with $F(G) \cong A$.
\end{enumerate}
\end{lemma}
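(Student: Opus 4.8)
The plan is to prove the three statements equivalent by establishing the cycle \ref{item:not-s-cent-a} $\Rightarrow$ \ref{item:not-s-cent-b} $\Rightarrow$ \ref{item:not-s-cent-c} $\Rightarrow$ \ref{item:not-s-cent-a}. The single observation underpinning everything is that in any extension $G$ of $A$ by $U$ the abelian group $A$ is self-centralising: since $A$ is abelian, the conjugation action of $G$ on $A$ factors through $G/A \cong U$, and by construction this induced map is precisely the inclusion $U \hookrightarrow \Aut(A)$, which is injective; hence $C_G(A) = A$. Consequently, for any subgroup $A \le K \le G$ the conjugation action of $K$ on $A$ is represented faithfully by the image of $K/A$ in $\Aut(A)$. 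Throughout I also use that $A$, being an abelian normal subgroup of $G$, lies inside $F(G)$.

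For \ref{item:not-s-cent-a} $\Rightarrow$ \ref{item:not-s-cent-b} I would take an arbitrary extension $G$, put $F = F(G)$ and $\bar N = F/A$, which is a normal subgroup of $U$. The idea is to exhibit $\bar N$ as centralising a series through $A$ and then invoke $F$-relevance to force $\bar N = \{1\}$, i.e. $F = A$. The series is the lower central series of $A$ under $F$, namely $A \ge [A,F] \ge [A,F,F] \ge \cdots$: each term is normal in $G$, the series reaches $\{1\}$ because $F$ is nilpotent, and $F$ --- hence its image $\bar N$ --- acts trivially on each successive quotient; discarding repetitions turns this into a strict series of the required form. The implication \ref{item:not-s-cent-b} $\Rightarrow$ \ref{item:not-s-cent-c} is then immediate, since the split extension $A \rtimes U$ is an extension of $A$ by $U$ and thus falls under the hypothesis of \ref{item:not-s-cent-b}.

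The substantive direction is \ref{item:not-s-cent-c} $\Rightarrow$ \ref{item:not-s-cent-a}, which I would prove by contraposition. Assuming $U$ is not $F$-relevant, fix a non-trivial $N \trianglelefteq U$ centralising an $N$-invariant series $A = A_1 > A_2 > \cdots > A_{l+1} = \{1\}$, and in a given extension $G$ let $M$ be the preimage of $N$, so that $A < M \trianglelefteq G$ and $M/A \cong N$. The goal is to show $M$ is nilpotent; then $M$ is a nilpotent normal subgroup of $G$ properly containing $A$, so $F(G) \ge M > A$, whence $F(G) \ne A$ and $F(G) \not\cong A$, and since $G$ was arbitrary no extension can satisfy \ref{item:not-s-cent-c}. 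To see that $M$ is nilpotent I would first note that $N$, being the stability group of a chain of the finite group $A$, is nilpotent, say of class $d$, so that $\gamma_{d+1}(M) \le A$. A short induction then pushes the lower central series down the fixed chain: because $A$ is abelian and $N$ centralises each $A_i/A_{i+1}$ one has $[A_i, M] \le A_{i+1}$, giving $\gamma_{d+1+i}(M) \le A_{i+1}$ and hence $\gamma_{d+1+l}(M) = \{1\}$.

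The main obstacle --- and the only point requiring genuine input beyond the definitions --- is the nilpotency of $M$. This rests on the classical fact that the group of automorphisms of a finite group stabilising a chain (acting trivially on every factor) is nilpotent; I would either cite this or reprove it by the same lower-central-series displacement argument. Care is also needed to ensure that each $A_i$ is normal in $M$, so that the commutator estimates $[A_i,M]\le A_{i+1}$ are legitimate; this follows because each $A_i \le A$ is $N$-invariant and the conjugation action of $M$ on $A$ factors through $N$ by the centraliser computation above.
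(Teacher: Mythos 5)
Your proof is correct and takes essentially the same route as the paper: the same cycle (a) $\Rightarrow$ (b) $\Rightarrow$ (c) $\Rightarrow$ (a), with $F(G)/A$ playing the role of the normal subgroup centralizing a series in the first implication and the preimage $M$ of $N$ playing the role of the nilpotent normal subgroup exceeding $A$ in the last. The only difference is that you spell out details the paper leaves implicit (the explicit choice of series $A \geq [A,F] \geq \cdots$ and the lower-central-series argument for the nilpotency of $M$), and those details are sound.
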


\begin{proof}
\begin{description}[leftmargin=0pt,fullwidth]
\item[\ref{item:not-s-cent-a} $\Rightarrow$ \ref{item:not-s-cent-b}:]
Suppose that $U$ is $F$-relevant and let $G$ be an arbitrary extension
of $A$ by $U$. We consider $A$ as normal subgroup of $G$. Then $A \leq F(G)$,
as $A$ is abelian and thus nilpotent. Further, as $G/A$ corresponds to $U$, 
it follows that $F(G)/A$ corresponds to a normal subgroup $N$ of $U$. As 
$F(G)$ is nilpotent, $N$ centralizes a series through $A$. As 
$U$ is $F$-relevant, $N$ is trivial and hence $A = F(G)$ follows.
\item[\ref{item:not-s-cent-b} $\Rightarrow$ \ref{item:not-s-cent-c}:]
Trivial, since the split extension exists.
\item[\ref{item:not-s-cent-c} $\Rightarrow$ \ref{item:not-s-cent-a}:]
Let $N$ be a normal subgroup of $U$ which centralizes a series through $A$. 
Then $N$ is nilpotent, as it is also a subgroup of $\Aut(A)$.
Let $H$ be the normal subgroup of $G$ with $A \leq H$ and $H/A$ 
corresponding to $N$. Then $H$ is a nilpotent normal subgroup of $G$ and 
$H \leq F(G) = A$ follows. Thus $H=A$ and $N = \{1\}$. Hence $U$ is 
$F$-relevant.
\qedhere
\end{description}
\end{proof}

Lemma \ref{relevant} translates our aim in this subsection to a determination 
up to conjugacy of all solvable $F$-relevant subgroups of $\Aut(A)$. As a 
first step towards this, we give an alternative description for the $F$-relevant 
subgroups of $\Aut(A)$. Let $\ell = p_1^{d_1} \cdots p_r^{d_r}$ be the prime 
factorization of $\ell$. We identify $A$ with $A_1 \times \cdots \times A_r$, 
where $A_i$ is elementary abelian of order $p_i^{d_i}$ for $1 \leq i \leq r$, 
and thus obtain
\[ \Aut(A) = \GL(d_1, p_1) \times \cdots \times \GL(d_r,p_r). \]
For a subgroup $U$ of $\Aut(A)$ and $1 \leq i \leq r$ we write $\sigma_i(U) = 
U \cap \GL(d_i,p_i)$ and we denote with $\pi_i(U)$ the projection of $U$ into 
$\GL(d_i, p_i)$. Further, let $P(U) = \pi_1(U) \times \cdots \times \pi_r(U)$ 
and $S(U) = \sigma_1(U) \times \cdots \times \sigma_r(U)$. Then $U$ is a
subdirect product in $P(U)$ with kernel $S(U)$. Figure \ref{picsubdir} 
illustrates these subgroups for the case $r = 2$.

\begin{figure}[htb]
\begin{center}
\setlength{\unitlength}{0.008in}
\begin{picture}(385,332)(150,335)
\thicklines
\put(200,500){\circle*{5}}
\put(220,480){\circle*{5}}
\put(300,400){\circle*{5}}
\put(360,340){\circle*{5}}
\put(420,400){\circle*{5}}
\put(500,480){\circle*{5}}
\put(520,500){\circle*{5}}
\put(360,460){\circle*{5}}
\put(360,540){\circle*{5}}
\put(360,660){\circle*{5}}
\put(360,620){\circle*{5}}

\put(360,660){\line(-1,-1){160}}
\put(200,500){\line( 1,-1){160}}
\put(360,340){\line( 1, 1){160}}
\put(520,500){\line(-1, 1){160}}
\put(340,640){\line( 1,-1){160}}
\put(380,640){\line(-1,-1){160}}
\put(460,560){\line(-1,-1){160}}
\put(260,560){\line( 1,-1){160}}
\put(360,620){\line( 0,-1){160}}

\put(370,655){\makebox(0,0)[lb]{$\Aut(A)$}}
\put(365,610){\makebox(0,0)[lb]{$P(U)$}}
\put(370,535){\makebox(0,0)[lb]{$U$}}
\put(370,450){\makebox(0,0)[lb]{$S(U)$}}
\put(370,335){\makebox(0,0)[lb]{$\{1\}$}}

\put(170,460){\makebox(0,0)[lb]{$\pi_1(U)$}}
\put(250,380){\makebox(0,0)[lb]{$\sigma_1(U)$}}
\put(420,380){\makebox(0,0)[lb]{$\sigma_2(U)$}}
\put(500,460){\makebox(0,0)[lb]{$\pi_2(U)$}}

\put(110,495){\makebox(0,0)[lb]{$\GL(d_1,p_1)$}}
\put(530,495){\makebox(0,0)[lb]{$\GL(d_2,p_2)$}}

\end{picture}
\caption{The subdirect product for $r = 2$}
\end{center}
\medskip
\label{picsubdir}
\end{figure}
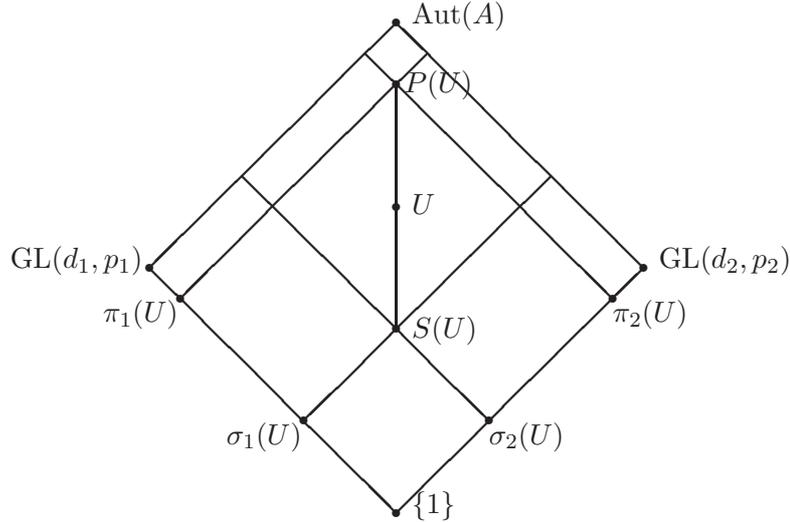

Next, we define $O(U) = O_{p_1}(\sigma_1(U)) \times \cdots \times 
O_{p_r}(\sigma_r(U))$, where $O_p(H)$ is the maximal normal $p$-subgroup 
of the group $H$. Thus we obtain the following series of subgroups
\[ \{1\} \leq O(U) \leq S(U) \leq U \leq P(U) \leq \Aut(A).\]

We note that $\sigma_i(U)$ is normal in $\pi_i(U)$ for $1 \leq i \leq r$
and thus $S(U)$ is normal in $P(U)$. Further, $O(U)$ is characteristic in
$S(U)$ and thus normal in $P(U)$. The following lemma analyzes the situation
further.

\begin{lemma} \label{charrel}
Let $A$ be the direct product of elementary abelian groups with
$\abs{A}=\ell$ and let $U \leq \Aut(A)$. 
\begin{enumerate}
\item 
$O(U)$ is the maximal normal subgroup of $U$ that centralizes a
series through $A$.
\item 
$U$ is $F$-relevant if and only if $O(U) = \{1\}$.
\end{enumerate}
\end{lemma}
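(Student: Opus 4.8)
The plan is to reduce both parts to a single criterion: a subgroup $N \leq \Aut(A)$ centralizes a series through $A$ if and only if $\pi_i(N)$ is a $p_i$-group for each $i$. To prove this criterion, recall that $\Aut(A) = \GL(d_1,p_1) \times \cdots \times \GL(d_r,p_r)$ acts on $A = A_1 \times \cdots \times A_r$ factorwise, so $N$ acts on $A_i$ through $\pi_i(N)$. For the forward direction, given an $N$-invariant series of $A$ on whose quotients $N$ acts trivially, I would intersect it with the characteristic factor $A_i$; since each subquotient of the intersected series embeds $N$-equivariantly into a subquotient of the original series, the induced series through $A_i$ is again centralized by $N$. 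A subgroup of $\GL(d_i,p_i)$ that stabilises a flag and acts trivially on all its quotients lies in the full unitriangular group for a suitable basis, i.e.\ in a Sylow $p_i$-subgroup, so $\pi_i(N)$ is a $p_i$-group. For the converse, a $p_i$-group $\pi_i(N)$ stabilises such a flag of $A_i$; concatenating these flags across the factors $A_1, \ldots, A_r$ yields an $N$-invariant series through $A$ with trivial quotient action.

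For part (a) I would first check that $O(U)$ satisfies the criterion: as $O(U)$ is contained in the direct product $S(U) = \sigma_1(U) \times \cdots \times \sigma_r(U)$, its projection $\pi_i(O(U))$ equals $O_{p_i}(\sigma_i(U))$, a $p_i$-group, so $O(U)$ centralizes a series; and $O(U)$ is normal in $U$ by the remarks preceding the lemma. It remains to show that every normal subgroup $N \trianglelefteq U$ centralizing a series lies in $O(U)$. By the criterion each $\pi_i(N)$ is a $p_i$-group. The crucial step --- and the part I expect to need the most care --- is to show $N \leq S(U)$: for $n \in N$ write $n = (n_1, \ldots, n_r)$ with $n_i = \pi_i(n)$ of $p_i$-power order; since the $p_i$ are distinct primes, the cyclic group $\gen{n}$ is the internal direct product of its Sylow subgroups, so each single-factor element $(1, \ldots, n_i, \ldots, 1)$ is already a power of $n$ and hence lies in $N \leq U$. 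Thus $(1,\ldots,n_i,\ldots,1) \in U \cap \GL(d_i,p_i) = \sigma_i(U)$, which gives $n \in S(U)$ and moreover $\sigma_i(N) = \pi_i(N)$.

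Finally, since $\sigma_i(N) = N \cap \GL(d_i,p_i)$ is normalised by $\sigma_i(U)$ (because $N \trianglelefteq U$ and the $i$-th factor is preserved under the relevant conjugations) and is a $p_i$-group, it is contained in $O_{p_i}(\sigma_i(U))$; hence $N = \sigma_1(N) \times \cdots \times \sigma_r(N) \leq O(U)$. This shows that $O(U)$ is the unique maximal normal subgroup of $U$ centralizing a series through $A$, establishing (a). Part (b) is then immediate: by definition $U$ is $F$-relevant exactly when it has no non-trivial normal subgroup centralizing a series, and by (a) the largest such subgroup is $O(U)$, so this holds precisely when $O(U) = \{1\}$.
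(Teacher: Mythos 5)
Your proof is correct and takes essentially the same route as the paper's: both reduce ``centralizes a series through $A$'' to the projections $\pi_i(N)$ being $p_i$-groups, use the distinctness of the primes to split a normal subgroup $N$ as the direct product of its projections, and then bound each factor by $O_{p_i}(\sigma_i(U))$. You merely supply more detail on two steps the paper leaves implicit (the unitriangularity criterion and the power-of-$n$ argument for the splitting).
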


\begin{proof}
(b) follows directly from (a) and it remains to prove (a).

Consider the prime factorization $\ell = p_1^{d_1} \cdots p_r^{d_r}$.
Suppose $i \in \{1, \ldots, r\}$ and set $U_i=\sigma_i(U)$.
Then $U_i$ is normal in $U$ and $O_{p_i}(U_i)$ 
is characteristic in $U_i$. Thus $O_{p_i}(U_i)$ is normal in $U$. Further,
$O_{p_i}(U_i)$ is a $p_i$-subgroup of $\GL(d_i,p_i)$ and thus centralizes a 
series through $A_i$. This implies that $O(U)$ is a normal subgroup of $U$
centralizing a series through $A$. 

Conversely, suppose $N$ is a normal subgroup of $U$ centralizing a series
through $A$. For $i \in \{1, \ldots, r\}$ let $P_i$ denote the projection 
of $N$ into $\GL(d_i, p_i)$. Then $P_i$ centralizes a series through $A_i$.
Thus $P_i$ is a $p_i$-group. As all primes $p_1, \ldots, p_r$ are different, 
it follows that $N = P_1 \times \cdots \times P_r$ and $P_i \leq U_i$. Hence 
$P_i \leq O_{p_i}(U_i)$ and $N \leq O(U)$. This yields that $O(U)$ is the 
maximal normal subgroup of $U$ that centralizes a series through $A$.
\end{proof}

Hence our aim translates now to the determination up to conjugacy of all
solvable subgroups $U$ of $\Aut(A)$ with $O(U) = \{1\}$; additionally, we
also determine their normalizers $N(U) = N_{\Aut(A)}(U)$. If $U$ is solvable, 
then $P(U)$ is solvable. We use the following approach.

{\bf Algorithm} {\it RelevantSolvableSubgroups( $\ell$ )} \vspace{-0.3cm}
\begin{itemize}
\item 
Factorize $\ell = p_1^{d_1} \cdots p_r^{d_r}$.
\item
For $1 \leq i \leq r$ determine up to conjugacy all solvable subgroups 
$P_i$ of $\GL(d_i,p_i)$ together with their normalizers 
$R_i = N_{\GL(d_i,p_i)}(P_i)$. \\
(See \cite[Sec. 10.4]{HEO05} for algorithms for this purpose.)
\item
For each combination $P = P_1 \times \cdots \times P_r$ with normalizer
$R = R_1 \times \cdots \times R_r$ determine up to conjugacy all subdirect
products $U$ in $P$ together with their normalizers $N_R(U)$. \\
(See \cite{Eic96} for an algorithm for this purpose.)
\item 
Discard those subdirect products $U$ with $O(U) \neq \{1\}$ and return
the remaining list of subgroups $U$ together with their normalizers 
$N_R(U)$.
\end{itemize}

In the special case $r = 1$ we only need to determine up to conjugacy those 
solvable subgroups $U$ of $\GL(d_1,p_1)$ with $O_{p_1}(U) = \{1\}$. 

\begin{theorem}
Algorithm {\it RelevantSolvableSubgroups( $\ell$ )} determines up to 
conjugacy all solvable $F$-relevant subgroups of $\Aut(A)$ together 
with their normalizers in $\Aut(A)$ where $A$ is the direct product of
elementary abelian groups with $\abs{A}=\ell$.
\end{theorem}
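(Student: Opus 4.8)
The plan is to verify correctness by showing that the algorithm's output list coincides, up to conjugacy in $\Aut(A)$, with the set of solvable $F$-relevant subgroups, and that the returned normalizers are correct. By Lemma \ref{charrel}(b), $F$-relevance is equivalent to $O(U) = \{1\}$, so the final discarding step correctly identifies exactly the $F$-relevant subgroups among the subdirect products produced. Thus the two things I must establish are: (i) \emph{completeness}, that every solvable $F$-relevant $U \leq \Aut(A)$ arises (up to conjugacy) as one of the constructed subdirect products; and (ii) \emph{correctness of the normalizers}, that $N_{\Aut(A)}(U)$ equals the normalizer $N_R(U)$ returned.

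For completeness, I would argue as follows. Let $U$ be solvable and $F$-relevant. Since $U$ is solvable, each projection $\pi_i(U)$ is a solvable subgroup of $\GL(d_i,p_i)$, and $P(U) = \pi_1(U) \times \cdots \times \pi_r(U)$ is solvable. The second algorithm step enumerates up to conjugacy all solvable subgroups $P_i \leq \GL(d_i,p_i)$, so there is an element $g = (g_1,\ldots,g_r) \in \Aut(A)$ conjugating each $\pi_i(U)$ to one of the listed $P_i$; hence $P(U)^g = P$ for some listed direct product $P = P_1 \times \cdots \times P_r$. Replacing $U$ by $U^g$, I may assume $P(U) = P$. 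Now $U$ is a subdirect product in $P$ with the projections being the full factors $\pi_i(U) = P_i$, and the third step enumerates all subdirect products in $P$ up to conjugacy by $N_{\Aut(A)}(P) \supseteq R = R_1 \times \cdots \times R_r$; the cited algorithm from \cite{Eic96} guarantees this enumeration is complete. So $U$ is $R$-conjugate (hence $\Aut(A)$-conjugate) to a listed subdirect product, as required. Conversely every listed $U$ with $O(U) = \{1\}$ is solvable (being a subgroup of the solvable $P$) and $F$-relevant by Lemma \ref{charrel}(b), so no spurious groups are returned.

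For the normalizers, the key point is that conjugation must preserve the whole layered structure $\{1\} \leq O(U) \leq S(U) \leq U \leq P(U)$, whose middle and lower terms are canonically determined by $U$. Since $S(U)$ and $O(U)$ are characteristic in $U$ (as noted before Lemma \ref{charrel}), any $x \in N_{\Aut(A)}(U)$ normalizes $P(U) = P$ as well, because the projections $\pi_i$ and intersections $\sigma_i$ are defined intrinsically from the factor decomposition of $\Aut(A)$ and $x$ permutes the direct factors trivially (the factors are the distinct Sylow components $\GL(d_i,p_i)$, which $x$ fixes setwise since the primes $p_i$ are pairwise distinct). Hence $N_{\Aut(A)}(U) \leq N_{\Aut(A)}(P)$, and the subdirect-product algorithm of \cite{Eic96} computes $N_{\Aut(A)}(U)$ as exactly $N_R(U)$ within this normalizer of $P$; this is precisely the quantity the algorithm returns.

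The main obstacle I expect is the normalizer bookkeeping across the two enumeration steps: one must be sure that conjugating $U$ into standard form by $g$ in the completeness argument is tracked consistently so that the normalizer returned is genuinely $N_{\Aut(A)}(U)$ and not merely $N_R(U)$ for the pre-conjugated copy. This reduces to the claim $N_{\Aut(A)}(U) \leq N_{\Aut(A)}(P(U)) = N_{\Aut(A)}(P)$ together with correctness of the cited subdirect-product routine; once the characteristic nature of the structural subgroups is invoked, the remaining verification is routine.
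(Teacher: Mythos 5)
Your completeness and normalizer arguments follow the paper's proof essentially step for step: conjugate the projections $\pi_i(U)$ onto the listed representatives $P_i$, invoke the subdirect-product enumeration inside $P$, and observe that any $x \in N_{\Aut(A)}(U)$ acts componentwise on the direct decomposition of $\Aut(A)$, hence normalizes each $\pi_i(U)$ and therefore lies in $R$, giving $N_{\Aut(A)}(U) = N_R(U)$. (The appeal to $S(U)$ and $O(U)$ being characteristic is a harmless detour --- the componentwise action is all that is needed --- and note that in fact $N_{\Aut(A)}(P) = R$ exactly, not merely $N_{\Aut(A)}(P) \supseteq R$; you implicitly use this equality when you identify ``$N_R(U)$ within this normalizer of $P$'' with $N_{\Aut(A)}(U)$, so you should state it.)

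There is, however, a genuine omission: irredundancy. ``Determines up to conjugacy'' means the output contains exactly one representative of each conjugacy class, and you never rule out that two \emph{distinct} returned subgroups $U$ and $V$ are conjugate in $\Aut(A)$. Your closing remark that ``no spurious groups are returned'' only establishes soundness (each returned group is solvable and $F$-relevant), which is a different assertion. The paper devotes a separate paragraph to this, and the argument uses precisely the normalizer observation you already have: if $U^x = V$ for some $x \in \Aut(A)$, then $P(U)^x = P(V)$, so by the up-to-conjugacy choice of the $P_i$ in the second step one gets $P(U) = P(V) = P$ and $x \in N_{\Aut(A)}(P) = R$; since the subdirect products inside $P$ are enumerated up to $R$-conjugacy without repetition, $U = V$. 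Adding this paragraph closes the gap; without it the theorem as stated is not proved.
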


\begin{proof}
We continue to use the notation from the algorithm. 

First we observe that $N_R(U) = N_{\Aut(A)}(U)$. Let $g \in 
N_{\Aut(A)}(U)$. As $\Aut(A)$ is a direct product, it follows that
$g$ also normalizes each $\pi_i(U)$ and thus $P(U)$. Hence $g \in R$
and thus $g \in N_R(U)$. Thus $N_{\Aut(A)}(U) \subseteq N_R(U)$. As 
$N_R(U) \subseteq N_{\Aut(A)}(U)$ is obvious, the result follows.

Completeness: Let $U$ be a solvable subgroup of $\Aut(A)$ with $O(U) = 
\{1\}$. We show that $U$ is conjugate to a subgroup in the returned
list. First $\pi_i(U)$ is conjugate in $\GL(d_i, p_i)$ to a subgroup 
$P_i$ as determined in the algorithm. Hence we may suppose without loss 
of generality that $\pi_i(U) = P_i$ for $1\leq i\leq r$. Thus $P(U) = P$ 
with normalizer $R$ is determined by the algorithm. Then $U$ is a 
subdirect product in $P$ and hence is conjugate to a subgroup determined
by the algorithm.

Non-redundancy: Suppose that two of the determined groups $U$ and $V$ are
conjugate in $\Aut(A)$. Then $P(U)$ is conjugate to $P(V)$ in $\Aut(A)$
and by construction we obtain that $P(U) = P(V) = P$. Thus $U$ and $V$ are
two $\Aut(A)$-conjugate subgroups in $P$. As above in the normalizer
discussion, this implies that $U$ and $V$ are conjugate in $R = 
N_{\Aut(A)}(P)$. Thus $U = V$ by construction.
\end{proof}
 
\subsubsection{The relevant extensions}
\label{relexts}

Let $\ell$ be a positive integer, let $A$ be the direct product of elementary
abelian groups with $\abs{A}=\ell$ and let $U$ be an $F$-relevant subgroup of 
$\Aut(A)$ with normalizer $N(U) = N_{\Aut(A)}(U)$. Our aim is to determine 
up to isomorphism all extensions of $A$ by $U$. 

As described in \cite[p. 315ff]{Rob82}, each element $\delta$ of the group
of 2-cocycles $Z^2(U,A)$ defines an extension $G_\delta$ of $A$ by $U$ and
each extension of $A$ by $U$ is isomorphic to a group of the form $G_\delta$.
Hence the group $Z^2(U,A)$ allows the construction of a complete (but possibly 
redundant) set of isomorphism types of extensions of $A$ by $U$. It remains 
to solve the isomorphism problem.

The subgroup of 2-coboundaries $B^2(U,A)$ in $Z^2(U,A)$ reduces this 
isomorphism problem. If $\delta$ and $\lambda$ are two elements of $Z^2(U,A)$
which belong to the same coset of $B^2(U,A)$, then their corresponding 
extensions $G_\delta$ and $G_\lambda$ are isomorphic, see \cite[p. 316]{Rob82}. 

Another reduction is induced by the normalizer $N(U)$. This acts on 
$Z^2(U,A)$ via $(g(\delta))(u,v) := g( \delta( u^g, v^g ))$ for $\delta 
\in Z^2(U,A)$, $g \in N(U)$ and $u,v \in U$. If $\lambda$ and $\delta$ 
are two elements of $Z^2(U,A)$ which belong to the same orbit under 
$N(U)$, then their corresponding extensions $G_\delta$ and $G_\lambda$ 
are isomorphic, see \cite{Rob81}.  

For $\lambda \in Z^2(U,A)$ we write $[\lambda] := \lambda + B^2(U,A) \in 
H^2(U,A) = Z^2(U,A) / B^2(U,A)$. 
The action of $N(U)$ on $Z^2(U,A)$ leaves $B^2(U,A)$ setwise invariant and 
hence induces an action of $N(U)$ on $H^2(U,A)$. We denote 
$g([\lambda]):=[g(\lambda)]$ for $g\in N(U)$ and $\lambda\in\Z^2(U,A)$. 
Combining the reductions induced by $B^2(U,A)$ and this action,
we obtain that two elements $[\delta]$ and 
$[\lambda]$ in the same $N(U)$-orbit of $H^2(U,A)$ yield isomorphic 
extensions $G_\delta \cong G_\lambda$.

In the special case of $F$-relevant subgroups of $\Aut(A)$, we show in 
the following theorem that the action of $N(U)$ on $H^2(U,A)$ yields a 
full solution of the isomorphism problem. 

\begin{theorem}
\label{isomFC1}
Let $U$ be an $F$-relevant subgroup of $\Aut(A)$ and let $\delta, \lambda 
\in Z^2(U,A)$. Then $G_\delta \cong G_\lambda$ if and only if there 
exists an element $g \in N(U)$ with $g([\delta]) = [\lambda]$.
\end{theorem}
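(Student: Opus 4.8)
The reverse implication has already been established in the discussion preceding the theorem: if $g([\delta]) = [\lambda]$ for some $g \in N(U)$, then $[\delta]$ and $[\lambda]$ lie in a single $N(U)$-orbit of $H^2(U,A)$, and hence $G_\delta \cong G_\lambda$. So only the forward implication requires proof, and the plan is as follows. The decisive structural input is $F$-relevance. By Lemma~\ref{relevant} we have $A = F(G_\delta)$ and $A = F(G_\lambda)$, where $A$ denotes the given normal copy in each extension (the proof of \ref{item:not-s-cent-a}$\Rightarrow$\ref{item:not-s-cent-b} actually yields equality, not merely $F(G)\cong A$). Since the Fitting subgroup is characteristic, any isomorphism $\varphi\colon G_\delta \to G_\lambda$ must satisfy $\varphi(A) = A$. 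Consequently $\varphi$ restricts to an automorphism $g := \varphi|_A \in \Aut(A)$ and induces an automorphism $\bar\varphi$ of the common quotient $U$.

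First I would pin down the compatibility between $g$ and $\bar\varphi$. Computing the conjugation action of a lift of $u \in U$ on $A$ in each extension and transporting it through $\varphi$ gives $g \circ u = \bar\varphi(u) \circ g$ as maps on $A$; that is, $\bar\varphi(u) = g u g^{-1}$ in $\Aut(A)$ for every $u \in U$. Since $\bar\varphi(u) \in U$, this shows that conjugation by $g$ stabilises $U$, so $g \in N(U)$ --- precisely the element demanded by the theorem.

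It then remains to identify the effect of $\varphi$ on cocycles. Choosing transversals, I would realise $G_\delta$ and $G_\lambda$ on the underlying set $A \times U$ with multiplication $(a,u)(b,v) = (a\, u(b)\, \delta(u,v),\, uv)$ and the analogous rule for $\lambda$, and record $\varphi$ in the normalised form $\varphi(a,u) = (g(a)\,\mu(u),\, \bar\varphi(u))$ for some map $\mu\colon U \to A$ with $\mu(1) = 1$. Imposing that $\varphi$ be a homomorphism and cancelling the abelian terms gives, after reindexing via $\rho(u) := \mu(u^g)$, an identity of the shape $\lambda(u,v) = g(\delta)(u,v)\,(\partial\rho)(u,v)^{-1}$, in which $g(\delta)(u,v) = g(\delta(u^g,v^g))$ is exactly the twisted cocycle defined in the text and $\partial\rho$ is a $2$-coboundary. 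Passing to $H^2(U,A)$ then yields $[\lambda] = [g(\delta)] = g([\delta])$, as required.

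The main obstacle is the opening step: without $F$-relevance there is no reason for an abstract isomorphism to carry $A$ onto $A$, and the entire argument collapses. Thus the essential point is the characteristic nature of $A = F(G_\delta) = F(G_\lambda)$ supplied by Lemma~\ref{relevant}; everything downstream---the compatibility $\bar\varphi(u)=gug^{-1}$ and the cohomological bookkeeping---is forced once $\varphi$ is known to respect $A$. The only genuinely technical part is the careful cancellation needed to match the precise twisted action $(u,v) \mapsto g(\delta(u^g,v^g))$ and to recognise the residual factor as a coboundary, so that the reduction really takes place in $H^2(U,A)$ rather than merely in $Z^2(U,A)$.
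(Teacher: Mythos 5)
Your proposal is correct and follows essentially the same route as the paper's proof: use $F$-relevance to identify $A$ with the (characteristic) Fitting subgroup of both extensions so that the isomorphism restricts to some $g\in\Aut(A)$, derive the compatibility $\bar\varphi(u)=gug^{-1}$ to conclude $g\in N(U)$, and then extract the coboundary from the homomorphism condition to obtain $[\lambda]=g([\delta])$. If anything, you make explicit a step the paper leaves tacit (why the isomorphism must carry the embedded copy of $A$ onto itself), which is exactly the right point to emphasise.
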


\begin{proof}
It suffices to show that an isomorphism $\iota : G_\delta \to G_\lambda$
implies that $[\delta]$ and $[\lambda]$ are in the same $N(U)$-orbit.

We consider $A$ as additive group and write the extension $G_\delta$ as 
set $\{(u,a) \mid u \in U, a \in A \}$ with multiplication $(u,a)(v,b) 
= (uv, v(a)+b+\delta(u,v))$. Let $A_\delta = \{ (1, a) \mid a \in A \}
\leq G_\delta$. Then $\iota$ maps $A_\delta$ onto $A_\lambda$ and hence
there exists an automorphism $\beta \in \Aut(A)$ with $\iota((1,a)) = 
(1, \beta(a))$. Further, $\iota$ induces an automorphism $\alpha$ of 
the quotient $G_\delta/A_\delta \cong U \cong G_\lambda / A_\lambda$
and thus $\iota$ has the form 
\[ \iota : G_\delta \to G_\lambda 
         : (u,a) \mapsto (\alpha(u), \beta(a) + t(a,u))\]
for some map $t : A\times U \to A$ satisfying $t(a,1)=0$.
Evaluating the equation $\iota((u,a)) = \iota((u,0)) 
\iota((1,a)) = \iota((u,0)) (1, \beta(a))$ shows that $t(a,u) = 
t(0,u)$ for all $u \in U$ and $a \in A$.
We thus write $t(u) = t(0,u)$ to shorten notation.

Evaluating $\iota((1,a))\iota(u,0) = \iota((u,u(a)))$ shows that 
$\alpha(u)(a) = (\beta u \beta^{-1})(a)$ for all $u \in U$ and $a \in A$.
Hence $\alpha(u) = \beta u \beta^{-1}$ for all $u \in U$. Thus $\beta
\in N(U)$ and $\alpha$ is induced by conjugation with $\beta^{-1}$. 

Let $\mu : U \to A : u \mapsto t(\alpha^{-1}(u))$. Then $\mu(1) = t(1) = 0$
and hence $\mu \in C^1(U,A)$. Let $\sigma \in B^2(U,A)$ be the coboundary 
induced by $\mu$, i.e. $\sigma(u,v):=\mu(uv)-v(\mu(u))-\mu(v)$.
Evaluating $\iota((u,0)(v,0)) = \iota(u,0) \iota(v,0)$ yields that
$\beta(\delta(u,v)) + \sigma(\alpha(u), \alpha(v))
  = \lambda(\alpha(u), \alpha(v))$ for every $u, v \in U$. 

Let $g = \beta$. Then $\alpha^{-1}(u) = u^{g}$. Hence $g(\delta(u^g,v^g)) 
+ \sigma(u,v)= \lambda(u,v)$ for every $u, v \in U$. Thus $g([\delta]) 
= [\lambda]$ as desired.
\end{proof}

We note (without proof) that dual to Theorem~\ref{isomFC1} one can also 
determine the automorphism groups of the considered extensions. 

\begin{theorem}
\label{autFC1}
Let $U$ be an $F$-relevant subgroup of $\Aut(A)$ and let $\delta \in 
Z^2(U,A)$. Then 
\[ \eta: \Aut(G_\delta) \to \Stab_{N(U)}([\delta])
       : \alpha \mapsto \alpha|_A\]
is an epimorphism with $\ker(\eta) \cong Z^1(U,A)$.
\end{theorem}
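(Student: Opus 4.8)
The plan is to establish this as the automorphism analogue of Theorem~\ref{isomFC1}, reusing the affine description of self-maps of $G_\delta$ extracted there and reading off which data are forced and which are free. I keep the realization $G_\delta = \{(u,a) \mid u \in U,\, a \in A\}$ with multiplication $(u,a)(v,b) = (uv, v(a)+b+\delta(u,v))$ and $A_\delta = \{(1,a) \mid a \in A\}$. First I would check that $\eta$ is a well-defined homomorphism landing in $\Stab_{N(U)}([\delta])$. Since $U$ is $F$-relevant, Lemma~\ref{relevant} gives $A_\delta = F(G_\delta)$, which is characteristic in $G_\delta$; hence every $\alpha \in \Aut(G_\delta)$ restricts to an automorphism $\alpha|_A$ of $A_\delta \cong A$, and $\alpha \mapsto \alpha|_A$ respects composition because restriction to a characteristic subgroup does. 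That the image lies in the claimed stabilizer is read off from the proof of Theorem~\ref{isomFC1} with $\lambda = \delta$: every $\alpha$ has the form $(u,a) \mapsto (\beta u \beta^{-1}, \beta(a)+t(u))$ with $\beta = \alpha|_A \in N(U)$, and the identity derived there becomes $\beta([\delta]) = [\delta]$, i.e. $\alpha|_A \in \Stab_{N(U)}([\delta])$.

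The main work, and the only step I expect to be a genuine obstacle, is surjectivity, which amounts to inverting that construction. Given $g \in \Stab_{N(U)}([\delta])$, the condition $g([\delta]) = [\delta]$ says $g(\delta) - \delta \in B^2(U,A)$, so there is $\mu \in C^1(U,A)$ with $\mu(1) = 0$ whose coboundary $\sigma$ satisfies $g(\delta(u^g,v^g)) + \sigma(u,v) = \delta(u,v)$ --- exactly the identity produced (for $\lambda = \delta$) in Theorem~\ref{isomFC1}. Setting $t(u) := \mu(gug^{-1})$ and $\iota(u,a) := (gug^{-1},\, g(a)+t(u))$, I would expand both sides of $\iota((u,0)(v,0)) = \iota(u,0)\,\iota(v,0)$ and check that the homomorphism requirement for $\iota$ collapses precisely to the above coboundary identity; this is the computation of Theorem~\ref{isomFC1} run backwards. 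Bijectivity of $\iota$ is immediate, since conjugation by $g$, the action of $g$ on $A$, and the shift by $t$ are each invertible, so $\iota \in \Aut(G_\delta)$; and $\iota(1,a) = (1, g(a))$ gives $\eta(\iota) = g$, establishing surjectivity.

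Finally I would determine the kernel. If $\eta(\alpha) = \id$ then $\beta = \id$, so $\alpha = \iota_t$ has the form $(u,a) \mapsto (u, a+t(u))$ with $t(1) = 0$, and in this case the homomorphism condition reduces to $t(uv) = v(t(u)) + t(v)$, which is exactly the defining relation for $Z^1(U,A)$ in the sign convention fixed by the coboundary formula above. Conversely every such $\iota_t$ is an automorphism fixing $A$ pointwise. Since a direct computation gives $\iota_t \circ \iota_s = \iota_{t+s}$, the assignment $t \mapsto \iota_t$ is an isomorphism from $Z^1(U,A)$ onto $\ker(\eta)$, which is what remains to be shown.
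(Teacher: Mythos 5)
Your proof is correct, and it follows exactly the route the paper indicates: the paper omits the proof of Theorem~\ref{autFC1}, remarking only that it is ``dual to Theorem~\ref{isomFC1}'', and your argument is precisely that dualization --- specializing the affine form $(u,a)\mapsto(\beta u\beta^{-1},\beta(a)+t(u))$ of an isomorphism to $\lambda=\delta$ to get well-definedness and the image condition, running the cocycle computation backwards for surjectivity, and reading off the kernel as $Z^1(U,A)$. The one point worth making explicit is that well-definedness rests on $A_\delta=F(G_\delta)$ being characteristic, which you correctly source from Lemma~\ref{relevant} via $F$-relevance of $U$.
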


Let $A$ be the direct product of elementary abelian groups with
$\abs{A}=\ell$ and let $U \leq \Aut(A)$ be an $F$-relevant solvable subgroup.
The following algorithm determines all extensions $G$ of $A$ by $U$ 
up to isomorphism together with $\Aut(G)$. 

{\bf Algorithm} {\it RelevantExtensions( $U$, $A$ )} \vspace{-0.3cm}
\begin{itemize}
\item
Compute $H^2(U,A)$ and $Z^1(U,A)$.
\item
Determine the orbits and stabilizers of $N(U)$ on $H^2(U,A)$. 
\item
For each orbit representative $[\delta] \in H^2(U,A)$, compute a
corresponding extension $G_\delta$.
\item
Determine $\Aut(G_\delta)$ from the stabilizer of $[\delta]$ in $N(U)$
and $Z^1(U,A)$.
\end{itemize}

Let $A = A_1 \times \cdots \times A_r$ with $A_i$ elementary abelian of
order $p_i^{d_i}$. Then $H^2(U,A) = H^2(U,A_1) \times \cdots \times 
H^2(U,A_r)$ and, similarly, $Z^1(U,A) = Z^1(U,A_1) \times \cdots \times
Z^1(U,A_r)$. The cohomology groups $H^2(U,A_i)$ and $Z^1(U, A_i)$ can
be computed with the algorithms in \cite{HEO05} for polycyclic groups.

\subsubsection{Algorithm I for solvable groups}
\label{algoI}

We now combine the results of this section to an algorithm to determine
all finite solvable groups of $F$-class 1 and $F$-rank $\ell$. 

{\bf Algorithm} {\it SolvableGroupsOfFClass1( $\ell$ )} \vspace{-0.3cm}
\begin{itemize}
\item 
Let $A$ be the direct product of elementary abelian groups with $\abs{A}=\ell$.
\item
Determine up to conjugacy all $F$-relevant finite solvable subgroups $U$ 
in $\Aut(A)$ together with their normalizers $N(U)$; see Section \ref{relsubs}.
\item
For each such $U$, determine all extensions $G$ of $A$ by $U$ up to 
isomorphism together with their automorphism groups $\Aut(G)$; see Section 
\ref{relexts}.
\end{itemize}

\begin{theorem}
Algorithm {\it SolvableGroupsOfFClass1} determines a complete and irredundant 
set of finite solvable groups $G$ with $F$-class 1 and $F$-rank $\ell$.
\end{theorem}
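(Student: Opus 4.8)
The plan is to prove completeness and irredundancy separately, assembling the structural results of this section. Throughout, let $A$ denote the direct product of elementary abelian groups with $\abs{A}=\ell$ fixed in the first step of the algorithm, and recall that the isomorphism type of $A$ is determined by $\ell$ alone, while for $F$-class $1$ the $F$-rank $\ell$ equals $\abs{F(G)}$.

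For completeness, I would start with an arbitrary finite solvable group $G$ of $F$-class $1$ and $F$-rank $\ell$. Applying Lemma \ref{fitting} with $H = G$ (legitimate since $G$ is solvable), one obtains $F(G) \cong A$ with $F(G)$ self-centralizing in $G$. This self-centralizing property is the crux: it forces the conjugation action of $G$ on $F(G)$ to have kernel exactly $F(G)$, so that $G/F(G)$ embeds into $\Aut(F(G)) \cong \Aut(A)$ as a solvable subgroup $U$. Thus $G$ is isomorphic to an extension of $A$ by $U$, and the implication \ref{item:not-s-cent-c} $\Rightarrow$ \ref{item:not-s-cent-a} of Lemma \ref{relevant} shows that $U$ is $F$-relevant. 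By the completeness part of the theorem justifying \emph{RelevantSolvableSubgroups}, $U$ is $\Aut(A)$-conjugate to one of the listed representatives; applying the conjugating automorphism to $A$ replaces $G$ by an isomorphic copy for which $U$ equals a listed representative. Finally, $G \cong G_\delta$ for some $\delta \in Z^2(U,A)$ by standard extension theory, and Theorem \ref{isomFC1} guarantees that $[\delta]$ lies in the $N(U)$-orbit of one of the representatives enumerated by \emph{RelevantExtensions}. Hence $G$ is isomorphic to a group constructed by the algorithm.

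For irredundancy, suppose two constructed groups $G_\delta$ and $G_\lambda$ are isomorphic, arising from listed $F$-relevant subgroups $U$ and $V$ respectively. Since the Fitting subgroup is characteristic, any isomorphism $\iota : G_\delta \to G_\lambda$ carries $F(G_\delta)$ onto $F(G_\lambda)$; by Lemma \ref{relevant} both coincide with the respective copies of $A$. Restricting $\iota$ to these copies yields some $\beta \in \Aut(A)$, and the compatibility of $\iota$ with the two conjugation actions — carried out exactly as in the proof of Theorem \ref{isomFC1}, where $\alpha(u) = \beta u \beta^{-1}$ is derived — gives $V = \beta U \beta^{-1}$, so $U$ and $V$ are $\Aut(A)$-conjugate. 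The non-redundancy part of the \emph{RelevantSolvableSubgroups} theorem then forces $U = V$. With a common $U$, Theorem \ref{isomFC1} shows that $[\delta]$ and $[\lambda]$ lie in the same $N(U)$-orbit on $H^2(U,A)$; since \emph{RelevantExtensions} retains exactly one representative per orbit, we conclude $[\delta] = [\lambda]$ and hence $G_\delta = G_\lambda$.

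The individual structural steps are covered by the cited results, so the proof is largely an assembly. The point requiring the most care is the bookkeeping of the two conjugacy reductions: first, that an abstract extension realizing $G$ can be transported along an $\Aut(A)$-conjugacy onto a listed subgroup without altering its isomorphism type; and second, dually, that any isomorphism of two constructed extensions necessarily descends to an $\Aut(A)$-conjugacy of the underlying subgroups. Both hinge on the self-centralizing Fitting subgroup from Lemma \ref{fitting}(c), which makes the action of $U$ on $A$ faithful and thereby pins the extension data down to the pair $(U, [\delta])$ up to the combined effect of $N(U)$ and $\Aut(A)$-conjugacy.
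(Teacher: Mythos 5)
Your proof is correct and follows essentially the same route as the paper's: completeness via Lemma \ref{fitting} and Lemma \ref{relevant} together with the $\Aut(A)$-conjugacy reduction to a listed subgroup, and irredundancy by noting that an isomorphism of two constructed extensions preserves the Fitting subgroup, hence induces an $\Aut(A)$-conjugacy of the acting subgroups (forcing them to coincide by the non-redundancy of \emph{RelevantSolvableSubgroups}) and then applying Theorem \ref{isomFC1}. The only minor omission is the soundness direction --- that every group the algorithm outputs really has $F$-class $1$ and $F$-rank $\ell$ --- which the paper checks first and which follows immediately from the implication (a) $\Rightarrow$ (b) of Lemma \ref{relevant} that you already use implicitly.
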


\begin{proof}
First we observe that every group $G$ in the output of Algorithm I is of the
required type. As $G$ is an extension of $A$ by $U$, it follows that it is
finite solvable. Lemma \ref{relevant} asserts that $G$ has $F$-class 1 and 
$F(G) \cong A$. Hence $G$ has $F$-rank $\ell$.

Completeness:
Let $G$ be a finite solvable group of $F$-class 1 with $F$-rank $\ell$.
Then $G$ is an extension of $A$ by some $V \leq \Aut(A)$ by Lemma
\ref{fitting}. The group $V$ is $F$-relevant by Lemma \ref{relevant}.
Hence $V$ is conjugate $\Aut(A)$ to some subgroup $U$ determined in 
Step 1 of Algorithm I. We can consider consider $G$ as extension of
$A$ by $U$. Using Theorem \ref{isomFC1} it follows that $G$ is 
isomorphic to a group in the output of Algorithm I.

Non-redundancy:
Suppose $G_1$ and $G_2$ in the output of Algorithm I are isomorphic.
Let $G_i$ be an extension of $A$ by $U_i$ for $i = 1,2$. Then $U_1$
and $U_2$ are conjugate in $\Aut(A)$. We write $U = U_1$ and assume 
that $G_2$ is also an extension of $A$ by $U$, as the conjugation from 
$U$ to $U_2$ induces an isomorphism of extensions. Using Theorem 
\ref{isomFC1} we now obtain that $G_1 = G_2$.
\end{proof}

\subsubsection{An example}

We determine the groups of order $96 = 2^5 \cdot 3$ with $F$-class $1$. 
Each such group is solvable and has $F$-rank $\ell \in \{2^n \cdot 3^m 
\mid 0 \leq n \leq 5, 0 \leq m \leq 1\}$. 

{\bf Case 1:} $\ell = 2^n$ with $0 \leq n \leq 5$. Let $A$ be the
elementary abelian group of order $\ell$. Then $\Aut(A) = \GL(n, 2)$.
The relevant subgroups $U$ of $\Aut(A)$ are the solvable subgroups
of $\GL(n,2)$ of order $2^{5-n} \cdot 3$ with $O_2(U) = \{1\}$. The
following table lists the possible isomorphism types for $U$ and
their value $n$, the number of conjugacy classes in $\Aut(A)$ with 
this isomorphism type of subgroup $U$ and the relevant extensions
with this isomorphism type of group $U$; the extensions are described
by their number in the small groups library.

\begin{center}
\begin{tabular}{c|c|c|c}
$n$ & $U$ & \# of classes & extensions \\
\hline
4 & $S_3$ & 3 & (96, 194), (96, 195), (96, 226), (96, 227) \\
5 & $C_3$ & 2 & (96, 228), (96, 229)
\end{tabular}
\end{center}

{\bf Case 2:} $\ell = 2^n \cdot 3$ with $0 \leq n \leq 5$. Let
$A$ be the group $C_2^n \cdot C_3$. Then $\Aut(A) = \GL(n,2) \times
\GL(1,3)$. The relevant subgroups of $\Aut(A)$ are the solvable 
subgroups $U$ of $\Aut(A)$ of order $2^{5-n}$ with $O(U) = \{1\}$. 
Note that $\GL(4,2)$ has two conjugacy classes of subgroups $V$ of order 
$2$. Each of these determine $V \times \GL(1,3) \cong C_2^2$ and thus 
yield three conjugacy classes of subgroups $U$ of order $2$ in $\Aut(A)$.
Of these, two satisfy $O(U) = \{1\}$. The following table lists the 
possibilities for $U$.

\begin{center}
\begin{tabular}{c|c|c|c}
$n$ & $U$ & \# of classes & extensions \\
\hline
4 & $C_2$ & 4 & (96, 159), (96, 160), (96, 218), (96, 219), (96, 230) \\
5 & $\{1\}$ & 1 & (96, 231)
\end{tabular}
\end{center}

\section{Descendants}
\label{fdesc}

In this section we describe an effective method for Algorithm II. Thus 
given a finite group $G$ of $F$-class $c \geq 1$, our aim is to determine
the descendants of $G$ up to isomorphism. Again, we are particularly 
interested in the computation of descendants of finite solvable groups.

We give a brief overview on our method: Its first step is the construction 
of a certain covering group $G^*$ of $G$, see Subsection \ref{sec_cover}. 
This will have the property that every descendant of $G$ is isomorphic to 
a quotient of $G^*$. Further, it will turn out that the isomorphism problem
for descendants of $G$ translates to an orbit-problem for the action of a 
certain group of automorphisms on certain subgroups of $G^*$. Hence the 
covering group construction yields an effective solution of the isomorphism 
problem for descendants. 

In the following subsections we discuss the details of this method. In
the final subsection \ref{algoII} we summarize our resulting method for
Algorithm II.

\subsection{Covering groups of finite groups}
\label{sec_cover}

Let $G$ be a finite group of $F$-rank $\ell = p_1^{d_1} \cdots p_r^{d_r}$ 
and let $k = p_1 \cdots p_r$ be the core of $\ell$. Let $g = \{g_1, \ldots, 
g_n\}$ be an arbitrary generating set of $G$ and let $F$ be free on $n$ 
generators $\{f_1,\ldots, f_n\}$. Let $\mu : F \to G$ be the homomorphism 
induced by mapping $f_i$ to $g_i$ for $1\leq i \leq n$. Denote $R = 
\ker(\mu)$ and let $L$ be the full preimage of $F(G)$ under $\mu$. Then 
we define
\[ K(G) := [R,L]R^k \mbox{ and } 
   G^* := F/ K(G) \text{ and } 
   M(G) := R/ K(G). \]
We call $G^*$ a \Defn{covering group} of $G$ with \Defn{multiplicator} 
$M(G)$ and \Defn{covering kernel} $K(G)$ and we denote $\mu$ as the 
\Defn{presentation epimorphism} of $G$ associated with $G^*$. Further, 
for $p \in \{p_1, \ldots, p_r\}$ we define
\[ G_p^* := F/[R,L]R^p \text{ and } M_p(G) := R/[R,L]R^p. \]
We call $G_p^*$ a \Defn{$p$-covering group} of $G$ with 
\Defn{$p$-multiplicator} $M_p(G)$. The group $R/R' R^p$ is an elementary 
abelian $p$-group of rank $1+(n-1)|G|$ by Schreier's theorem. The 
$p$-multiplicator $M_p(G)$ is isomorphic to $K_p / [K_p, L]$. 

By construction, $M(G) = M_{p_1}(G) \times \ldots \times M_{p_r}(G)$. This
implies that the covering group $G^*$ is a subdirect product of the 
$p$-covering groups $G_{p_1}^*, \ldots, G_{p_r}^*$ with amalgamated factor 
group $G$.

\begin{theorem}
The isomorphism type of $G_p^*$ depends only on $G$, the number of generators
$n$ and the prime $p$, but not on the presentation $\mu$. Similarly, the 
isomorphism type of $G^*$ depends only on $G$ and $n$, but not on $\mu$.
\end{theorem}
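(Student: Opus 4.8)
The proof should run in parallel for $G^*$ and for each $G_p^*$, since the only special feature used is that the multiplicator has exponent dividing $k$ (resp.\ $p$); I would describe the argument for $G_p^*$, the case of $G^*$ being identical with $k$ in place of $p$. Fix $n$ and let $\mu_1,\mu_2\colon F\to G$ be two presentation epimorphisms on the same free group $F$ of rank $n$, with $R_i=\ker(\mu_i)$, $L_i=\mu_i^{-1}(F(G))$ and $G_{p,i}^*=F/[R_i,L_i]R_i^p$; let $\epsilon_i\colon G_{p,i}^*\to G$ be the projection, so that $M_p(G_i)=\ker(\epsilon_i)$ is the multiplicator. The goal is to produce mutually inverse isomorphisms $G_{p,1}^*\cong G_{p,2}^*$ that commute with the $\epsilon_i$.

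The plan is to construct a surjective homomorphism $G_{p,2}^*\to G_{p,1}^*$ over $G$, and then the same in the opposite direction. First I would lift $\mu_2$ through $\epsilon_1$. A naive lift (choosing arbitrary preimages of the $\mu_2(f_j)$) need not have image generating $G_{p,1}^*$, and this is exactly the point where the Frattini argument familiar from $p$-group generation breaks down, because when $n>d(G)$ the multiplicator $M_p(G_1)$ is \emph{not} contained in $\Frat(G_{p,1}^*)$. The way around this is Gasch\"utz' lemma on lifting generating systems: since $G_{p,1}^*$ is a quotient of $F$ it is generated by $n$ elements, so $n\ge d(G_{p,1}^*)$, and the tuple $(\mu_2(f_j))_j$ generates $G=G_{p,1}^*/M_p(G_1)$; hence it lifts to a generating $n$-tuple of $G_{p,1}^*$. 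This yields a surjective homomorphism $\tilde\mu_2\colon F\to G_{p,1}^*$ with $\epsilon_1\tilde\mu_2=\mu_2$.

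Next I would verify that $\tilde\mu_2$ factors through $G_{p,2}^*$. From $\epsilon_1\tilde\mu_2=\mu_2$ one gets $\tilde\mu_2(R_2)\le M_p(G_1)$ and $\tilde\mu_2(L_2)\le\epsilon_1^{-1}(F(G))$; since $M_p(G_1)$ is centralized by $\epsilon_1^{-1}(F(G))$ (as $[R_1,L_1]\le[R_1,L_1]R_1^p$) and is elementary abelian of exponent $p$, it follows that $\tilde\mu_2([R_2,L_2])=1$ and $\tilde\mu_2(R_2^p)=1$, hence $\tilde\mu_2([R_2,L_2]R_2^p)=1$. Therefore $\tilde\mu_2$ induces an epimorphism $G_{p,2}^*\to G_{p,1}^*$ commuting with the projections onto $G$. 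Interchanging the roles of $\mu_1$ and $\mu_2$ gives an epimorphism $G_{p,1}^*\to G_{p,2}^*$ over $G$. Two finite groups admitting surjections onto one another have equal order, so both epimorphisms are isomorphisms; being compatible with the $\epsilon_i$, they are isomorphisms over $G$.

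Finally, for $G^*$ I would either repeat the argument verbatim with $k$ in place of $p$ (noting that $M(G)$ is a direct product of elementary abelian groups of exponent dividing $k$, so $M(G)^k=1$, which is all that the commutator-and-power computation above needs), or invoke the presentation-independence of each $G_{p_i}^*$ over $G$ together with the fact, already recorded in the excerpt, that $G^*$ is the subdirect product of the $G_{p_i}^*$ with amalgamated factor group $G$, i.e.\ their fibre product over $G$; since this fibre product is formed canonically from data that is now presentation-independent, the isomorphism type of $G^*$ depends only on $G$ and $n$. I expect the only genuinely delicate point to be the surjective lift of the second paragraph; once Gasch\"utz' lemma is in hand, the remaining steps are formal.
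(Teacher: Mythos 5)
Your proposal is correct. It differs from the paper's proof mainly in where the appeal to Gasch\"utz is made. The paper quotes \cite[Satz 1]{Gas54} for the fact that the isomorphism type of $F/R'R^p$ depends only on $G$ and $n$, then observes $G_p^* \cong (F/R'R^p)/([R,L]R^p/R'R^p)$ and passes to the quotient, and finally treats $G^*$ via the subdirect product of the $G_{p_i}^*$ --- your second option for the last step. You instead run the underlying argument directly on the covering groups themselves: lift the generating tuple $(\mu_2(f_j))_j$ of $G = G_{p,1}^*/M_p(G_1)$ to a generating tuple of the finite group $G_{p,1}^*$ by Gasch\"utz' lemma on lifting generating systems (the same tool the paper invokes for Theorem \ref{natural}(a) and Theorem \ref{isomsol}), check that the resulting epimorphism $F \to G_{p,1}^*$ kills $[R_2,L_2]R_2^p$ because $M_p(G_1)$ has exponent $p$ and is centralized by the preimage of $F(G)$, and conclude by symmetry and finiteness. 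Your route is longer but more self-contained, and it buys something the paper's one-line reduction leaves implicit: to descend from the presentation-independence of $F/R'R^p$ to that of its quotient $G_p^*$, one must know that the comparison isomorphism respects the images of $R$ and $L$ (i.e.\ is an isomorphism over $G$), so that it carries $[R_1,L_1]R_1^p/R_1'R_1^p$ onto $[R_2,L_2]R_2^p/R_2'R_2^p$; your construction makes this compatibility explicit from the start. One cosmetic remark: the two epimorphisms you produce need not be mutually inverse as announced in your opening sentence, but this is harmless, since the order comparison already forces each of them separately to be an isomorphism over $G$.
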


\begin{proof}
Let $p \in \{p_1, \ldots, p_r\}$. By \cite[Satz 1]{Gas54} we obtain that
the isomorphism type of the group $F/R'R^p$ depends on $G$ and on $n$, but 
not on $\mu$. As $G_p^* = F/[R,L] R^p \cong (F/R'R^p)/([R,L]R^p/R'R^p)$, 
the result for the $p$-covering group follows. Using that $G^*$ is a 
subdirect product of $G_{p_1}^*, \ldots, G_{p_r}^*$, we now obtain the
same result for the covering group $G^*$.
\end{proof}

\subsection{Covering groups of finite solvable groups}

If $G$ is solvable, then $G^*$ is solvable. Thus $G^*$ can be described
by a polycyclic presentation in this case and this type of presentation
allows effective computations with $G^*$. In this section we describe
how a polycyclic presentation of $G^*$ can be determined. 

Let $\mu : F \to G : f_i \mapsto g_i$ be the presentation epimorphism 
associated with $G^*$ with $R = \ker(\mu)$. Then $R/R'$ is free 
abelian on $(n-1)|G|+1$ generators and $G$ acts by conjugation on 
$R/R'$. We consider each prime $p$ dividing the $F$-rank $\ell$ in 
turn and use the Gaussian elimination algorithm to determine a basis 
for $[R,L]R^p/R'$. This allows us to read off a basis for the quotient 
$(R/R') / ([R,L]R^p/R') \cong R/[R,L]R^p \cong M_p(G)$. This information 
allows us to extend a polycyclic presentation of $G$ to a polycyclic 
presentation of $G_p^*$. In turn, this allows us to determine a polycyclic 
presentation for $G^*$ as subdirect product of $G_{p_1}^*, \ldots, G_{p_r}^*$.  

We usually use a minimal generating set for $G$ to determine $G^*$. We
note that this can be determined effectively as for example described in 
\cite{LMe94}. 

\subsection{Automorphisms of covering groups}
\label{sec_autos}

Let $G$ be a finite group with covering group $G^*$ and multiplicator 
$M = M(G)$. Let $\autGM$ denote the group of automorphisms of $G^*$ which 
leaves $M$ setwise invariant. Every $\alpha \in \autGM$ induces an 
automorphism $\ol{\alpha}$ of $G^*/M$ via $\ol{\alpha}(gM) = \alpha(g)M$. 
We identify $G$ with $G^*/M$ and thus $\ol{\alpha}$ with an automorphism 
of $G$ and obtain the following homomorphisms:
\begin{align*}
  \eta_G &: \autGM \to \Aut(G) \;: \alpha \mapsto \ol{\alpha}, \\
  \eta_M &: \autGM \to \Aut(M) : \alpha \mapsto \alpha|_M .
\end{align*}

We use these homomorphisms to investigate the structure of $\autGM$.
Let $W = \ker(\eta_G)$ and $V = \ker(\eta_G) \cap \ker(\eta_M)$. Then $W$ 
contains those automorphisms of $G^*$ that induce the identity map on the 
quotient $G^*/M \cong G$ and $V$ contains those automorphisms that induce 
the identity map on $G^*/M$ and on $M$. The group $\autGM$ has the 
normal series 
\[ \autGM \unrhd W \unrhd V \unrhd \{id\}. \]

Let $\gamma: G \to \Aut(M) : g \mapsto \ol{g}$ denote the conjugation 
action of $G$ on $M$ in $G^*$ and let $C$ be the centralizer in $\Aut(M)$ of 
the image of $\gamma$. The natural action of $C$ on $M$ induces an 
action of $C$ on $Z^2(G,M)$ via $c(\delta)(g,h) = c(\delta(g,h))$. This 
action leaves $B^2(G,M)$ invariant and hence induces an action on $H^2(G,M)$
via $c([\delta]) = [c(\delta)]$. Let $\epsilon$ denote an element of 
$Z^2(G,M)$ which defines $G^*$ as extension of $M$ by $G$. Recall that
$[\epsilon] = \epsilon + B^2(G,M) \in H^2(G,M)$.

\begin{theorem} \label{natural}
Let $G$ be a finite group with covering group $G^*$ and multiplicator $M$.
\begin{enumerate}
\item
$\eta_G$ is surjective and thus $\autGM/W \cong \Aut(G)$.
\item
$W/V \cong \eta_M(W) = \Stab_C([\epsilon])$.
\item
$V \cong Z^1(G,M)$.
\end{enumerate}
\end{theorem}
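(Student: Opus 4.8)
The plan is to exploit that, by construction, $M$ is an \emph{abelian} normal subgroup of $G^*$: since $L \supseteq R$ we have $R' = [R,R] \leq [R,L] \leq K(G)$, so $M = R/K(G)$ is abelian, and conjugation in $G^*$ makes $M$ a $G$-module with the action $\gamma$. Thus $G^*$ is an extension $\{1\} \to M \to G^* \to G \to \{1\}$ of the abelian $G$-module $M$ with class $[\epsilon] \in H^2(G,M)$, and parts (b) and (c) become the standard description of the automorphisms of such an extension, while part (a) is the lifting property special to covering groups. I would prove (c), then (b), and finally (a); the first isomorphism theorem then upgrades the surjectivity in (a) to $\autGM/W \cong \Aut(G)$ and, via $V = \ker(\eta_M|_W)$, identifies $W/V$ in (b).

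For (c), an element $\alpha \in V$ fixes $G^*/M$ and $M$ pointwise, so $\alpha(x) = x\, d(\ol x)$ for a well-defined map $d\colon G \to M$ (well-defined because $\alpha|_M = \id$ and $M$ is abelian). Expanding $\alpha(xy) = \alpha(x)\alpha(y)$ and using the $G$-action $\gamma$ shows that $d$ satisfies the cocycle identity, i.e. $d \in Z^1(G,M)$; conversely every derivation defines such an $\alpha$, and $\alpha \mapsto d$ turns composition into addition. This gives the isomorphism $V \cong Z^1(G,M)$.

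For (b), take $\alpha \in W$ and set $c = \alpha|_M = \eta_M(\alpha)$. Since $\alpha$ fixes $G = G^*/M$, conjugating a lift $\ol g \in G^*$ and using that $M$ is abelian gives $c(\gamma(g)(m)) = \gamma(g)(c(m))$, so $c \in C$. Moreover $\alpha$ is an isomorphism of the extension $\epsilon$ onto the extension obtained by pushing $\epsilon$ forward along $c$, so $c$ fixes $[\epsilon]$ and $\eta_M(W) \subseteq \Stab_C([\epsilon])$. Conversely, for $c \in \Stab_C([\epsilon])$ the pushed-forward extension is equivalent to $\epsilon$, and composing this equivalence with $c$ on $M$ produces an $\alpha \in W$ with $\alpha|_M = c$; hence $\eta_M(W) = \Stab_C([\epsilon])$. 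Since $V = W \cap \ker \eta_M = \ker(\eta_M|_W)$, the first isomorphism theorem yields $W/V \cong \eta_M(W) = \Stab_C([\epsilon])$.

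Part (a) is the main obstacle and is the only place where the covering group construction, rather than general extension theory, is needed. Given $\psi \in \Aut(G)$, I would choose $w_i \in F$ with $\mu(w_i) = \psi(g_i)$ and let $\tilde\psi\colon F \to F$ be the endomorphism $f_i \mapsto w_i$, so that $\mu\tilde\psi = \psi\mu$. Then $\tilde\psi$ preserves $R = \ker\mu$ and, because $F(G)$ is characteristic in $G$, it preserves $L = \mu^{-1}(F(G))$; hence it preserves $K(G) = [R,L]R^k$ and induces $\alpha \in \mathrm{End}(G^*)$ with $\alpha(M) \subseteq M$ and $\ol\alpha = \psi$. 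The crux is to show $\alpha$ is an \emph{automorphism}: as $G^*$ is finite it suffices to prove $\alpha$ is surjective, and since $\ol\alpha = \psi$ is onto we already have $\alpha(G^*)M = G^*$, so everything reduces to showing that $\alpha|_M\colon M \to M$ is onto. This is where the work lies. I would obtain it from the relation-module structure: on $\ol R = R/R'$ the map induced by $\tilde\psi$ is $\psi$-semilinear and covers the isomorphisms that $\psi$ induces on $\Z G$ and on the augmentation ideal, so it is a $\psi$-semilinear isomorphism in the sense of Gaschütz \cite{Gas54}; reducing modulo $[R,L]R^k$ then shows $\alpha|_M$ is bijective. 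Alternatively, one notes that $\mu' = \psi\mu$ has the same kernel $R$ and the same $L$, hence defines the same covering group $F/K(G) = G^*$ from a second presentation, and transports $\psi$ into the required automorphism via the presentation-independence theorem proved above.
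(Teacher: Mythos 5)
Your treatment of parts (b) and (c) is sound: (c) is the standard derivation argument, and (b) is a hands-on version of the compatible-pairs machinery that the paper simply cites from Robinson; both routes are essentially equivalent and your explicit push-forward construction of a preimage for $c\in\Stab_C([\epsilon])$ is correct.

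Part (a), however, has a genuine gap, and it sits exactly where you locate "the crux." If you choose \emph{arbitrary} lifts $w_i\in F$ of $\psi(g_i)$, the induced endomorphism $\alpha$ of $G^*$ need not be surjective, and your proposed repair does not hold: the $\psi$-semilinear map that $\tilde\psi$ induces on the relation module $R/R'$ is \emph{not} in general an isomorphism. Concretely, in the Gruenberg sequence $0\to R/R'\to (\Z G)^{(n)}\to \Z G\to\Z\to 0$ the map on $(\Z G)^{(n)}$ is given by the Fox Jacobian $(\overline{\partial w_j/\partial f_i})$, whose invertibility over $\Z G$ depends on the choice of the $w_j$; by the snake lemma the map on $R/R'$ is an isomorphism precisely when that Jacobian is. For a small counterexample take $G=C_2$ with the (non-minimal) generating set $g_1=g_2=g$, $\psi=\id$, and the lifts $w_1=w_2=f_1$: then $\alpha(G^*)=\langle f_1K\rangle$ is a proper subgroup of $G^*$ (here $G^*/\Frat(G^*)\cong C_2^2$), so $\alpha$ is not even surjective. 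The theorem is stated for arbitrary generating sets, and nothing in your argument rules this out; nor would restricting to minimal generating sets obviously help, since you would then need $M\leq\Frat(G^*)$, which you do not establish. Your fallback via presentation independence is also insufficient as stated: since $\mu'=\psi\mu$ has the same kernel and the same $L$, the two "presentations" yield \emph{literally} the same group $F/K(G)$, and the paper's independence theorem only asserts an abstract isomorphism of covering groups, not one compatible with the two projections onto $G$; the compatible version is essentially what part (a) asserts, so invoking it here is circular.

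The fix is the one the paper uses: do not take arbitrary lifts. First apply Gasch\"utz' generator-lifting theorem \cite{Gas54} to the finite group $G^*$ with normal subgroup $M$ to obtain elements $h_1,\dots,h_n$ of $G^*$ that \emph{generate} $G^*$ and satisfy $\varphi(h_i)=\psi(g_i)$; then lift these to $f_i'\in F$ and define $\rho(f_i)=f_i'$. Surjectivity of the induced endomorphism $\alpha$ is then immediate because the $h_i=\alpha(g_i^*)$ generate $G^*$, and finiteness of $G^*$ upgrades $\alpha$ to an automorphism. With that change the rest of your outline for (a) (preservation of $R$, $L$, and hence $K(G)$, and the first isomorphism theorem giving $\autGM/W\cong\Aut(G)$) goes through as you describe.
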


\begin{proof}
(a) Let $\mu : F \to G : f_i \mapsto g_i$ be the presentation epimorphism
associated with $G^*$ and let $K(G)$ be the corresponding covering kernel
so that $G^* = F/K(G)$. Let $g_i^* = f_i K(G)$ for $1 \leq i \leq n$. 
Then $g_1^*, \ldots, g_n^*$ generates $G^*$ and $\varphi : G^* \to G : 
g_i^* \mapsto g_i$ is a natural epimorphism from $G^*$ to $G$.

Let $\beta \in \Aut(G)$. Then $\{ \beta(g_1), \ldots, \beta(g_n)\}$ is 
another generating set of $G$. By Gaschütz' theorem \cite[Satz 1]{Gas54}, 
there exists a generating set $h_1, \ldots, h_n$ of $G^*$ so that 
$\varphi(h_i) = \beta(g_i)$ for $1 \leq i \leq n$. 

Let $f_i' \in F$ with $f_i' K(G) = h_i$ for $1 \leq i \leq n$ and
define the homomorphism $\rho : F \to F$ via $\rho(f_i) = f_i'$. Then
$\rho$ induces $\beta$ via $\mu$ on $G$. Thus $\rho(R) \subseteq R$
and $\rho(L) \subseteq L$. Hence $\rho(K(G)) \subseteq K(G)$ and $\rho$ 
induces a homomorphism $\alpha: G^* \to G^*$. By construction,
$\alpha(g_i^*) = h_i$ for $1 \leq i \leq n$ and hence $\alpha$ is
surjective. As $G^*$ is finite, this yields that $\alpha \in \Aut(G^*)$.
In turn, this implies that $\alpha$ is a preimage of $\beta$ under $\eta_G$. 

(b)
Clearly $W/V \cong \eta_M(W)$. It remains to show that $\eta_M(W) = 
\Stab_C([\epsilon])$. We use the ideas of \cite{Rob81} for this purpose. 
First, we recall that the group of compatible pairs $\Comp(G,M)$ is 
defined as $\{(\alpha, \beta) \in \Aut(G) \times \Aut(M) \mid
    \ol{g^\alpha} = \beta^{-1} \ol{g} \beta \text{ for } g \in G\}$.
Next, we recall from \cite{Rob81} that $\Comp(G,M)$ acts on $H^2(G,M)$ 
via $\delta(g,h)^{(\alpha,\beta)} = \delta(g^{\alpha^{-1}}, 
h^{\alpha^{-1}})^\beta$ for $\delta \in Z^2(G,M)$, $(\alpha, \beta) \in
\Comp(G,M)$ and $g,h \in G$. Let \[\eta : \autGM \to \Aut(G) \times
\Aut(M) : \alpha \mapsto (\eta_G(\alpha), \eta_M(\alpha)).\] Then \cite{Rob81}
shows that $\im(\eta) = \Stab_{\Comp(G,M)}([\epsilon]) \leq \Comp(G,M)$. 

Let $\alpha \in W$. Then the definition of $W$ implies that $\eta(\alpha) 
= (\id, \beta) \in \Comp(G,M)$ for some $\beta\in\Aut(M)$.
The definitions of $C$ and $\Comp(G,M)$ imply 
that $\beta \in C$. Hence $\eta(W) \leq \{\id\} \times C \leq \Comp(G,M)$. 
Thus $\eta(W) = \Stab_{\{\id\} \times C}([\epsilon])$. It remains to note
that $\eta(W) = \{\id\} \times \eta_M(W)$ and that the action of $C$ on
$H^2(G,M)$ as defined above coincides with the action of $\{\id\} \times C$ 
as subgroup of $\Comp(G,M)$. This yields the desired result.

(c)
Is folklore (see e.g. \cite[Exercises 11.4]{Rob82}).
\end{proof}

Theorem \ref{natural} and its proof are constructive and indicate a method
to determine $\autGM$ explicitly. First, $Z^1(G,M)$ and thus $V$ can be
computed readily as described in \cite[Section 7.6.1]{HEO05}. Then, a preimage for a 
generator of $\Aut(G)$ under $\eta_G$ can be obtained as described in the
proof of Theorem \ref{natural}. It remains to determine generators for the
image $\eta_M(W)$ together with preimages under $\eta_M$ for each 
generator. We discuss this in more detail.

Recall that $M = M_{p_1} \times \cdots \times M_{p_r}$, where $M_{p_i}$
is elementary abelian of rank $e_i$, say. Thus 
\[ \Aut(M) = \GL(e_1, p_1) \times \cdots \times \GL(e_r, p_r). \]
Further, the group $G^*$ is the subdirect product of $G_{p_1}^*, \ldots,
G_{p_r}^*$ and each $G_{p_i}^*$ is an extension of $M_{p_i}$ with $G$. 
Let $C_{p_i}$ denote the centralizer of the conjugation action of $G_{p_i}$
on $M_{p_i}$. Next, note that
\[ Z^2(G, M) = Z^2(G, M_{p_1}) \times \cdots \times Z^2(G, M_{p_r}).\]
If $\epsilon \in Z^2(G,M)$ defines $G^*$ as extension of $M$ by $G$ and
$\epsilon = \epsilon_{p_1} + \ldots + \epsilon_{p_r}$ with $\epsilon_{p_i}
\in Z^2(G, M_{p_i})$, then $\epsilon_{p_i}$ defines $G_{p_i}^*$ as 
extension of $M_{p_i}$ by $G$. Let $[\epsilon_{p_i}] = \epsilon_{p_i}
+ B^2(G, M_{p_i})$ for $1 \leq i \leq r$. 

The following lemma reduces the determination of $\Stab_C([\epsilon])$ 
to the elementary abelian direct factors of $M$.

\begin{lemma} \label{imageetaM}
Let $W_{p_i} = \eta_M(W) \cap \GL(e_i, p_i)$ for $1 \leq i \leq r$. 
\begin{enumerate}
\item
$\eta_M(W) = W_{p_1} \times \cdots \times W_{p_r}$.
\item
$W_{p_i} = \Stab_{C_{p_i}}([\epsilon_i])$ for $1 \leq i \leq r$.
\end{enumerate}
\end{lemma}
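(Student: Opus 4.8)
The plan is to prove Lemma~\ref{imageetaM} by exploiting the direct-product decomposition $\Aut(M) = \GL(e_1,p_1) \times \cdots \times \GL(e_r,p_r)$ together with the subdirect-product structure of $G^*$ over the $p$-covering groups $G_{p_i}^*$. The key observation is that everything in sight respects this decomposition: the multiplicator splits as $M = M_{p_1} \times \cdots \times M_{p_r}$, the cocycle module splits as $Z^2(G,M) = Z^2(G,M_{p_1}) \times \cdots \times Z^2(G,M_{p_r})$, and hence so does $H^2(G,M)$ and the distinguished class $[\epsilon] = ([\epsilon_1], \ldots, [\epsilon_r])$. The centralizer $C$ of the image of $\gamma$ in $\Aut(M)$ decomposes as $C = C_{p_1} \times \cdots \times C_{p_r}$, since the conjugation action of $G$ on $M$ preserves each $M_{p_i}$ and commuting with it on $M$ is equivalent to commuting with it on each factor separately.

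For part (b), I would argue that the action of $C$ on $H^2(G,M)$ is the diagonal action of $C = C_{p_1} \times \cdots \times C_{p_r}$ on $H^2(G,M) = H^2(G,M_{p_1}) \times \cdots \times H^2(G,M_{p_r})$, with each factor $C_{p_i}$ acting on $H^2(G,M_{p_i})$ exactly as in the single-prime setup. Consequently the stabilizer of $[\epsilon]$ in $C$ factors as $\Stab_C([\epsilon]) = \Stab_{C_{p_1}}([\epsilon_1]) \times \cdots \times \Stab_{C_{p_r}}([\epsilon_r])$. By Theorem~\ref{natural}(b) we have $\eta_M(W) = \Stab_C([\epsilon])$, and combined with (a) this identifies the factor $W_{p_i} = \eta_M(W) \cap \GL(e_i,p_i)$ with $\Stab_{C_{p_i}}([\epsilon_i])$.

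For part (a), the point is that $\eta_M(W) = \Stab_C([\epsilon])$ is a subgroup of $C = C_{p_1} \times \cdots \times C_{p_r}$, and since the stabilizer itself decomposes as a direct product of its intersections with the factors (by the displayed factorization of $\Stab_C([\epsilon])$), we immediately get $\eta_M(W) = W_{p_1} \times \cdots \times W_{p_r}$ with $W_{p_i} = \eta_M(W) \cap \GL(e_i,p_i)$. So in fact (a) and (b) are best proved together: establish the factorization of the $C$-action and of $\Stab_C([\epsilon])$ first, and then both statements drop out simultaneously.

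The main obstacle I anticipate is verifying cleanly that the $C$-action on $H^2(G,M)$ is genuinely diagonal with respect to the prime decomposition, i.e.\ that $c = (c_1, \ldots, c_r) \in C$ sends $[\epsilon] = ([\epsilon_1], \ldots, [\epsilon_r])$ to $(c_1([\epsilon_1]), \ldots, c_r([\epsilon_r]))$ with no cross terms. This rests on the fact that each $c_i$ acts trivially on $M_{p_j}$ for $j \neq i$ (being the identity on those factors) and that $\epsilon_i$ takes values in $M_{p_i}$, so the formula $c(\delta)(g,h) = c(\delta(g,h))$ respects the grading. Once that compatibility is checked — which is essentially a bookkeeping verification using that the primes $p_1, \ldots, p_r$ are distinct and the modules $M_{p_i}$ are $C$-invariant — the product decompositions of the stabilizer and hence of $\eta_M(W)$ are purely formal.
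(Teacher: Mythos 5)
Your proposal is correct and follows essentially the same route as the paper: the paper's (very terse) proof likewise observes that $C = C_{p_1} \times \cdots \times C_{p_r}$, invokes $\eta_M(W) = \Stab_C([\epsilon])$ from Theorem~\ref{natural}(b), and concludes that the stabilizer factors as $\Stab_{C_{p_1}}([\epsilon_{p_1}]) \times \cdots \times \Stab_{C_{p_r}}([\epsilon_{p_r}])$. Your write-up simply makes explicit the diagonal-action bookkeeping that the paper leaves implicit.
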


\begin{proof}
Let $C$ be the centralizer in $\Aut(M)$ of the action of $G^*$ on $M$.
The definition of a centralizer induces that $C$ is the direct product
of the groups $C_{p_1}, \ldots, C_{p_r}$. This yields that $\eta_M(W)$ 
is the direct product of the stabilizers $\Stab_{C_{p_i}}([\epsilon_{p_i}])$ 
as required.
\end{proof}

Let $p \in \{p_1, \ldots, p_r\}$ and let $e$ be the rank of $M_p$. The 
centralizer $C_p$ of the action of $G^*$ on $M_p$ can be determined by
the following steps:
\begin{itemize}
\item
Determine the subalgebra $L$ of $\F_p^{e \times e}$ centralizing the
action of $G^*$ on $M_p$.
\item
Determine the unit group $U(L)$ using the radical series of $L$ and
Wedderburn's theorem for each quotient of this series as described in
\cite{Sch00}.
\end{itemize}

Given generators for $C_p$ one can then construct generators for the
stabilizer $\Stab_{C_p}([\epsilon_i])$. The stabilizer is usually
obtained by concurrently constructing the orbit of $[\epsilon_i]$.
This is time-consuming if the orbit is large. Thus is it useful
to reduce $C_p$ to a subgroup containing the stabilizer {\em a priori}.
The following lemma exhibits a method for this purpose. 

\begin{lemma}
\label{trivact}
Let $p \in \{p_1, \ldots, p_r\}$ and let $S$ be a Sylow $p$-subgroup of 
$G^*$. Then $S$ centralizes a series $M_p = M_{p,1} > \ldots > M_{p,s}
= \{1\}$ through $M_p$. Let $T_p = S_p' S_p^p$ and $T_{p,i} = 
(T_p \cap M_{p,i})M_{p,i+1}/M_{p,i+1}$. Then $W_p$ acts trivially
on $T_{p,i}$ for $1 \leq i \leq s$.
\end{lemma}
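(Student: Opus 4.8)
The plan is to prove the two assertions separately: the opening statement is standard, and the triviality claim reduces to a filtration computation inside the Sylow subgroup $S$, with the genuine difficulty pushed onto the cohomological hypothesis on $\beta$. Since $M_p$ is a normal elementary abelian $p$-subgroup of $G^*$, we have $M_p\le O_p(G^*)\le S$, and a $p$-group acting by conjugation on a nontrivial $p$-group always has a nontrivial fixed-point subgroup; iterating this yields an $S$-invariant series $M_p=M_{p,1}>\cdots>M_{p,s}=\{1\}$ on whose factors $S$ acts trivially. For the computation below I would take this series to be induced from a central series $S=S_1\ge S_2\ge\cdots$ of $S$ by setting $M_{p,i}=S_i\cap M_p$ (deleting repetitions); then each $M_{p,i}$ is characteristic in $S$ and $[S,M_{p,i}]\subseteq M_{p,i+1}$.

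Now fix $\beta\in W_p$. As $W_p=\eta_M(W)\cap\GL(e,p)$, choose $\alpha\in W=\ker(\eta_G)$ with $\alpha|_{M_p}=\beta$; by definition $\alpha(x)\in xM$ for all $x\in G^*$. My first step is to arrange $\alpha(S)=S$. Writing $M=M_p\times M_{p'}$ and using $M_p\le S$ gives $\alpha(S)\subseteq SM=SM_{p'}$, and since $M_{p'}$ is a normal $p'$-subgroup, Sylow's theorem provides $x\in M_{p'}$ with $\alpha(S)=x^{-1}Sx$. Replacing $\alpha$ by $c_x\circ\alpha$ (conjugation $c_x\colon g\mapsto xgx^{-1}$) we may assume $\alpha(S)=S$; this keeps $\alpha\in W$ and leaves $\alpha|_{M_p}=\beta$ untouched, because $c_x$ centralises the abelian group $M$. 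Then $\alpha$ restricts to an automorphism of $S$ with $\alpha(s)\in sM_p$, so $\alpha(s)=s\,\psi(s)$ defines a crossed homomorphism $\psi\colon S\to M_p$; moreover $\alpha$ fixes the characteristic subgroups $M_{p,i}$ and $T_p=S'S^p$, so $\beta$ genuinely acts on $T_{p,i}$, and it remains to show $\alpha(t)\equiv t\pmod{M_{p,i+1}}$ for every $t\in T_p\cap M_{p,i}$.

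Since $T_p=S'S^p$ is generated by commutators $[a,b]$ and $p$-th powers $c^p$, it suffices to treat such a $t$ lying in $M_{p,i}$. Expanding with $\alpha(s)=s\psi(s)$ and using that $M_p$ is abelian and normal, a commutator gives $\alpha([a,b])=[a,b]\cdot w$ with $w\in[S,M_p]$ built from $\psi(a),\psi(b)$ by commutation with $S$, while a power gives $\alpha(c^p)=c^p\cdot N_c(\psi(c))$ with $N_c=\sum_{j=0}^{p-1}c^{\,j}$. Here I would invoke the characteristic-$p$ identity $\sum_{j=0}^{p-1}X^j=(X-1)^{p-1}$ in $\F_p[X]$ to rewrite $N_c=(c-1)^{p-1}$ as an operator on $M_p$. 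Thus every correction term is obtained by applying augmentation operators $(s-1)$ to elements of $M_p$, and since the series is $S$-central each application raises the filtration degree by one. For $i=1$ this already forces $w\in M_{p,2}$ and $N_c(\psi(c))\in M_{p,p}\subseteq M_{p,2}$, whence $\alpha(t)\equiv t\pmod{M_{p,2}}$.

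The hard part is the passage to $i\ge2$, where this degree count is too weak: a single commutation only places $w$ in $M_{p,2}$, not in $M_{p,i+1}$, and the crossed homomorphism $\psi$ need not raise the central-series degree. To close the gap one must use the full hypothesis on $\beta$, namely that by Theorem~\ref{natural}(b) the pair $(\id,\beta)$ stabilises the class $[\epsilon_p]$ of the extension $G_p^*$ of $M_p$ by $G$. Restricting this extension to a Sylow $p$-subgroup of $G$ exhibits $S$ as an extension of $M_p$ whose commutator and $p$-power maps take values spanning $T_p\cap M_p$; stabilising $[\epsilon_p]$ together with $\alpha$ inducing the identity on $G$ forces these values to be fixed by $\beta$ modulo coboundaries, and the coboundary ambiguity lands in $[S,M_p]$ and is therefore absorbed into $M_{p,i+1}$. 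This is the step I expect to demand the most care, since it is precisely where the purely filtration-theoretic estimate must be upgraded to the cohomological argument.
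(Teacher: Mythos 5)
Your write-up proves the lemma only for $i=1$: for $i\ge 2$ you identify the difficulty and sketch a cohomological strategy without carrying it out, so as it stands the argument is incomplete, and the gap sits exactly where you say it does. The paper's own proof is a two-line computation that closes this gap with a single structural claim: it asserts that $M_p$ is \emph{central} in $S$. Granting that, all of your correction terms vanish identically rather than merely landing in $[S,M_p]$ --- one gets $\beta([a,b])=[am,bn]=[a,b]$ and $\beta(a^p)=(am)^p=a^p$ exactly --- so $\beta$ fixes $T_p=S'S^p$ pointwise and the statement for every section $T_{p,i}$ follows with no filtration bookkeeping and with no use of the stabilizer condition on $[\epsilon_p]$ beyond the fact that $\beta$ extends to an automorphism of $G_p^*$ centralizing $G_p^*/M_p$. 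Relative to the paper, the one missing ingredient in your proof is therefore this centrality claim; the rest of what you do (passing to a representative $\alpha$ with $\alpha(S)=S$, the identity $\sum_{j=0}^{p-1}X^j=(X-1)^{p-1}$ over $\F_p$) is either the same computation or a refinement of it that the paper does not need.

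That said, your reluctance to assume the correction terms die is not misplaced, and you should examine the centrality claim before adopting it. It is clear when the Sylow $p$-subgroup of $G$ lies in $F(G)$ --- in particular for $p$-groups, where the lemma is the familiar one from $p$-group generation --- because $M_p$ is centralized by the preimage $L/K$ of $F(G)$ in $G_p^*$, which then contains $S$. In general, however, $S$ maps onto a Sylow $p$-subgroup of $G$ that need not lie in $F(G)$, and the construction only guarantees that the preimage of $F(G)$ centralizes $M_p$. For instance, for $G=S_4$ and $p=2$ the $G$-action on $M_2$ is nontrivial (otherwise $M_2$ would be a quotient of $R/[R,F]R^2$, whose rank is far below $8$), and its kernel is a normal subgroup containing $V_4$ but not equal to $S_4$, hence contained in $A_4$; so no Sylow $2$-subgroup of $G$ centralizes $M_2$. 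In that situation your computation yields only $(\beta-1)\bigl(T_p\cap M_p\bigr)\subseteq[S,M_p]\subseteq M_{p,2}$, which settles $i=1$ but not $i\ge2$, and your proposed appeal to $\Stab_{C_p}([\epsilon_p])$ is the natural place to seek the extra input --- but you have not supplied that argument. Until you either complete it or justify the centrality of $M_p$ in $S$ in the generality in which the lemma is stated, the proof is not done.
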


\begin{proof}
Each element $w \in W_{p_i}$ defines an automorphism $\beta$ of $G_p^*$ 
that centralizes $G \cong G_p^*/M_p$. Note that $M_p \leq S_p$ by 
construction and $\beta$ centralizes $S_p/M_p$. Let $a,b \in S_p$. 
Then $\beta(a) = a m$ and $\beta(b) = b n$ for certain $m,n \in M_p$. 
Thus $\beta([a,b]) = [\beta(a), \beta(b)] = [am, bn] = [a,b]$, as 
$M_p$ is central in $S_p$. Hence $\beta$ induces the identity on 
$S_p'$. Similarly, $\beta(a^p) = \beta(a)^p = (am)^p = a^p$ and thus 
$\beta$ induces the identity on $S_p^p$. This yields the desired result.
\end{proof} 

\subsection{Allowable subgroups}
\label{sec_allow}

Let $G$ be a finite group of $F$-class $c$ with covering group $G^*$ and
multiplicator $M$. Then $G^*$ is a finite group of $F$-class $c$ or $c+1$.
We define $N := \nu_c(G^*)$ and call it the \Defn{nucleus} of $G$. An  
\Defn{allowable  subgroup} $U$ of $G^*$ is a proper subgroup of $M$ which
is normal in $G^*$ and satisfies $M=NU$.

\begin{lemma}
If $U$ is an allowable subgroup of $G^*$, then $G^*/U$ is a descendant of $G$.
\end{lemma}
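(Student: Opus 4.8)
The plan is to verify the two defining properties of a descendant directly, namely that $G^*/U$ has $F$-class exactly $c+1$ and that $(G^*/U)/\nu_c(G^*/U) \cong G$. Everything rests on three structural facts about the covering group that I would record first. Writing $\mu\colon F \to G$, $R = \ker\mu$, $L$ for the preimage of $F(G)$ and $K(G) = [R,L]R^k$, I would note that $M = R/K(G)$ is central in $F(G^*) = L/K(G)$ (immediate from $[R,L]\subseteq K(G)$) and has exponent dividing $k$ (from $R^k\subseteq K(G)$), so in particular $M\leq F(G^*)$ and $M^k=\{1\}$. Next I would identify $F(G^*)=L/K(G)$ as the preimage of $F(G)$ under $G^*\to G^*/M\cong G$, which yields $F(G^*/M)=F(G^*)/M$. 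Finally, using the elementary fact that a surjection sends $\nu_i$ into $\nu_i$ (proved by induction from Lemma~\ref{altchar}, with the observation that the core $k$ of $|F(-)|$ is common to $G$, $G^*$ and $G^*/U$ since $M$ introduces no new primes), applied to $G^*\to G$, shows that $N=\nu_c(G^*)$ maps into $\nu_c(G)=\{1\}$, i.e.\ $N\subseteq M$; this is what makes the hypothesis $M=NU$ meaningful.

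The heart of the argument is the identity $\nu_c(G^*/U)=M/U$, which I would establish by a squeeze. For the lower bound, the surjection $G^*\to G^*/U$ sends $N=\nu_c(G^*)$ into $\nu_c(G^*/U)$, so $M/U = NU/U\subseteq \nu_c(G^*/U)$ using $M=NU$. For the upper bound, the surjection $G^*/U\to (G^*/U)/(M/U)\cong G$ sends $\nu_c(G^*/U)$ into $\nu_c(G)=\{1\}$ (as $G$ has $F$-class $c$), so $\nu_c(G^*/U)\subseteq M/U$. Combining these gives equality, and the first descendant property follows at once: $(G^*/U)/\nu_c(G^*/U) = (G^*/U)/(M/U)\cong G^*/M\cong G$.

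It remains to pin down the $F$-class as exactly $c+1$. Since $U$ is a proper subgroup of $M$, we have $M/U\neq\{1\}$, so $\nu_c(G^*/U)\neq\{1\}$ and the $F$-class is at least $c+1$. For the reverse bound I would invoke Lemma~\ref{altchar}: $\nu_{c+1}(G^*/U) = [F(G^*/U),\,M/U]\,(M/U)^k$. The power term vanishes because $M^k=\{1\}$, so everything reduces to showing $M/U$ is central in $F(G^*/U)$. The step I expect to be the main obstacle is the identification $F(G^*/U)=F(G^*)/U$: because $U$ is not a term of the $F$-central series, Lemma~\ref{nu-properties1}(c) does not apply directly, so I would argue by nilpotency instead. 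The inclusion $F(G^*)/U\subseteq F(G^*/U)$ is clear; conversely, if $\hat F/U = F(G^*/U)$ with $\hat F\trianglelefteq G^*$, then $\hat F\supseteq M$ and the image $\hat F/M$ is nilpotent normal in $G^*/M\cong G$, hence lies in $F(G)=F(G^*)/M$, forcing $\hat F\subseteq F(G^*)$. Granting this, $[F(G^*/U),\,M/U] = [F(G^*),M]\,U/U = \{1\}$ by centrality of $M$ in $F(G^*)$, so $\nu_{c+1}(G^*/U)=\{1\}$ and the $F$-class is exactly $c+1$, completing the proof that $G^*/U$ is a descendant of $G$.
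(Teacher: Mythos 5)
Your proof is correct and follows essentially the same route as the paper's: the paper's (one-line) proof likewise rests on the identity $\nu_c(G^*/U)=NU/U=M/U$, which you establish in detail via the squeeze argument, and then reads off the descendant properties. You have simply supplied the supporting facts ($N\leq M$, $F(G^*)=L/K(G)$, centrality and exponent of $M$, and $\nu_{c+1}(G^*/U)=\{1\}$) that the paper leaves implicit.
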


\begin{proof}
As $U$ is a supplement to $N$ in $M$, if follows that $\nu_c(G^*/U)
= NU/U = M/U$ and hence the result follows.
\end{proof}

The following theorem shows that the allowable subgroups are sufficient
to describe all descendants of $G$.

\begin{theorem}
Let $H$ be an arbitrary descendant of $G$. Then $H \cong G^*/U$ for an
allowable subgroup $U$.
\end{theorem}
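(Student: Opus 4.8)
The plan is to reverse the construction from the preceding lemma: given an arbitrary descendant $H$ of $G$, I need to produce an allowable subgroup $U$ of $G^*$ with $H \cong G^*/U$. By definition of descendant, $H$ has $F$-class $c+1$ and $H/\nu_c(H) \cong G$. The key idea is that the covering group $G^*$ has a universal property for such quotients: since $G^*$ is built from a free group $F$ on $n$ generators by factoring out $K(G) = [R,L]R^k$, I first want to realize $H$ as a quotient of $F$ through a homomorphism compatible with the presentation epimorphism $\mu$ of $G$.

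First I would choose generators. Since $H/\nu_c(H) \cong G$ and $G$ is generated by $g_1, \ldots, g_n$, I would lift these to elements $h_1, \ldots, h_n$ of $H$ generating $H$ modulo $\nu_c(H)$; because $\nu_c(H) \leq \nu_1(H) \leq \Frat(H)$ by Lemma \ref{nu-properties1}(a), these lifts actually generate $H$ itself. This gives an epimorphism $\psi : F \to H$ sending $f_i \mapsto h_i$, and by construction $\psi$ is compatible with $\mu$, i.e. $\ker(\psi)$ lies over $\ker(\mu) = R$ in the sense that $\psi$ descends to the isomorphism $H/\nu_c(H) \cong G = F/R$. The crucial step is then to show $K(G) = [R,L]R^k \leq \ker(\psi)$, so that $\psi$ factors through $G^* = F/K(G)$, yielding an epimorphism $\bar\psi : G^* \to H$. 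To verify this containment, I would use that $\nu_c(H)$ is an elementary abelian group (of exponent dividing $k$) centralized by $F(H)$, exactly the defining property of the last nontrivial term of the $F$-central series; this forces $\psi([R,L]) = [\psi(R), \psi(L)] = [\nu_c(H), F(H)] = \{1\}$ and $\psi(R^k) = \nu_c(H)^k = \{1\}$, where I use that $\psi(R) = \nu_c(H)$ and $\psi(L) = F(H)$ follow from matching the $F$-central series of $H$ to that of $G$ under $\psi$.

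Having obtained the epimorphism $\bar\psi : G^* \to H$, I set $U := \ker(\bar\psi)$, so $H \cong G^*/U$, and it remains to check that $U$ is allowable: that $U$ is normal in $G^*$, that $U \leq M$ with $U \neq M$, and that $M = NU$ where $N = \nu_c(G^*)$ is the nucleus. Normality is automatic. Since $\bar\psi$ restricts to the isomorphism $G^*/M \cong G \cong H/\nu_c(H)$, the kernel $U$ must map into $M$ and satisfy $\bar\psi(M) = \nu_c(H)$; as $H$ has $F$-class exactly $c+1$ we have $\nu_c(H) \neq \{1\}$, which gives $U \neq M$. The supplement condition $M = NU$ is the geometric heart: I would argue that $\bar\psi(N) = \bar\psi(\nu_c(G^*)) = \nu_c(H)$ (using that $N = \nu_c(G^*)$ maps onto $\nu_c(H)$ under any epimorphism inducing $G$ on the quotient), and since $\bar\psi(M) = \nu_c(H)$ as well, the images of $N$ and $M$ in $M/U$ coincide, giving $M = NU$.

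The main obstacle I anticipate is the verification that $K(G) \leq \ker(\psi)$ and, dually, the supplement condition $M = NU$; both hinge on identifying precisely how the $F$-central series of $H$ pulls back along $\psi$ and pushes forward along $\bar\psi$. The cleanest way to control this is Lemma \ref{altchar}, which characterizes the terms $\nu_{i+1}$ via the explicit commutator-and-power formula $[F(G),\nu_i]\,\nu_i^k$; applying it repeatedly lets me match $\psi(R) = \nu_c(H)$ and $\psi(L) = F(H)$ by tracking the series term by term, rather than appealing to a vague structural claim. I expect the routine normality and surjectivity checks to be unproblematic, and the genuine content to reside entirely in these two series-matching computations.
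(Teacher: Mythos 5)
Your proposal is correct and follows essentially the same route as the paper: lift the generators of $G$ to $H$ (using $\nu_c(H)\leq\Frat(H)$ to see the lifts generate), obtain $\lambda:F\to H$ with $\lambda(R)=\nu_c(H)$ and $\lambda(L)=F(H)$ via the compatibility of the $F$-central series with quotients, conclude $K(G)=[R,L]R^k\leq\ker\lambda$ so that $\lambda$ factors through $G^*$, and then verify that $U=\ker(\bar\lambda)$ is normal, properly contained in $M$, and a supplement to $N$ because $\bar\lambda(N)=\nu_c(H)=\bar\lambda(M)$. The only cosmetic difference is that you track the series via Lemma~\ref{altchar} where the paper invokes Lemma~\ref{nu-properties1}(c); the content is identical.
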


\begin{proof}
Let $\mu : F \to G : f_i \mapsto g_i$ be the presentation epimorphism 
associated with $G^*$. Let $R = \ker(\mu)$ and let $K(G)$ be the
corresponding covering kernel so that $G^* = F/K(G)$. 

As $H$ is a descendant of $G$, there exists an epimorphism $\kappa : H \to G$
with kernel $\nu_c(H)$. Let $e=\{e_1,\ldots,e_n\}$ be a subset of $H$ such 
that $\kappa(e_i)=g_i$ for $1\leq i\leq n$. Then $e$ generates $H$, since by
Lemma~\ref{fitting} (a) we have $\nu_c(H) \leq \Frat(H)$ for $c > 0$.
Let $\lambda : F \to H$ the be homomorphism with $\lambda(f_i) = e_i$ for 
$1 \leq i \leq n$. Then $\mu = \kappa \circ \lambda$ by construction.

As $\kappa$ has kernel $\nu_c(H)$, it maps $F(H)$ onto $F(G)=F(H/\nu_c(H))
=F(H)/\nu_c(H)$ by Lemma~\ref{fitting}. It follows that $\lambda$ maps 
$L$ onto $F(H)$ and $R$ onto $\nu_c(H)$. Therefore $K(G)$ is mapped to 
$\nu_{c+1}(H)=\{1\}$ and thus $K(G)$ is contained in the kernel of 
$\lambda$. As $G^* = F/K(G)$, we obtain that $\lambda$ induces a map 
(which we also denote by $\lambda$) from $G^*$ onto $H$. 

Let $U$ be the kernel of $\lambda$. It remains to show that $U$ is a 
allowable subgroup of $G^*$. First, $U$ is a normal subgroup of $G^*$.
Next, $U$ is contained in $M$, as $\lambda \circ \kappa$ is an epimorphism
on $G$ with kernel $M$. Then $U$ is a proper subgroup of $M$, as $H$
has larger $F$-class than $G$. Finally, $U N = M$, as $\lambda(N) = 
\nu_c(H) = \lambda(M)$.
\end{proof}

For a subgroup $U$ of $M$ let $U_p = U \cap M_p$ and note that $U_p$
is the Sylow $p$-subgroup of $U$. If $N$ is the nucleus of $M$ and $U$
is an allowable subgroup, then $U_p N_p = M_p$ and hence $U_p$ 
is a supplement to $N_p$ in $M_p$. Conversely, each allowable subgroup 
can be described as direct product of its Sylow subgroups and hence can 
be composed from supplements to $N_p$ in $M_p$.

\subsection{The isomorphism problem}
\label{sec_isom}

Again, let $G$ be a finite group with covering group $G^*$, multiplicator
$M$ and nucleus $N$. We consider the isomorphism problem for descendants. 
For this purpose we consider the action of $\autGM$ on the set of all
allowable subgroups. We first show that this action is well-defined
and then we prove that it solves the isomorphism problem for descendants.

\begin{lemma}
Let $U$ be an allowable subgroup of $G^*$ and let $\alpha \in \autGM$.
Then $U^{\alpha}$ is an allowable subgroup of $G^*$.
\end{lemma}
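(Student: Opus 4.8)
The plan is to verify directly that $U^\alpha$ satisfies the three defining conditions of an allowable subgroup: that it is a proper subgroup of $M$, that it is normal in $G^*$, and that $M = N U^\alpha$. The guiding observation is that $\alpha$ fixes both of the relevant subgroups $M$ and $N$ setwise, after which each condition transfers along $\alpha$ by formal properties of automorphisms.

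First I would record that $\alpha$ preserves $M$ and $N$. By the definition of $\autGM$, the automorphism $\alpha$ leaves $M$ setwise invariant, so $\alpha(M) = M$. The nucleus $N = \nu_c(G^*)$ is characteristic in $G^*$ by Lemma \ref{nu-properties1}(b), hence it too is fixed setwise, $\alpha(N) = N$.

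Next, since $U \leq M$ and $\alpha$ restricts to a bijection of $M$ onto itself, the image $U^\alpha$ is again contained in $M$; and because $U$ is a proper subgroup of $M$, so is its bijective image $U^\alpha$. Normality of $U^\alpha$ in $G^*$ is immediate, as $\alpha$ is an automorphism of $G^*$ and automorphisms carry normal subgroups to normal subgroups. Finally, applying $\alpha$ to the supplement condition $M = NU$ and using that $\alpha$ is multiplicative on products of subgroups gives $M = \alpha(M) = \alpha(NU) = \alpha(N)\,\alpha(U) = N\,U^\alpha$, which is exactly the required equality.

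There is no genuine obstacle here; the one point worth isolating is that $N$ must be shown to be $\alpha$-invariant, and this rests entirely on the fact that $N = \nu_c(G^*)$ is characteristic in $G^*$, already established in Lemma \ref{nu-properties1}(b). Everything else is a formal consequence of $\alpha$ being an automorphism of $G^*$ that stabilizes $M$.
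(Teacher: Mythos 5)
Your proof is correct and follows essentially the same route as the paper's: both arguments rest on the facts that $\alpha$ stabilizes $M$ by definition of $\autGM$, that $N=\nu_c(G^*)$ is characteristic and hence $\alpha$-invariant, and that applying $\alpha$ to $M=NU$ yields $M=NU^\alpha$. Your version is slightly more complete in that it also explicitly checks that $U^\alpha$ remains a \emph{proper} subgroup of $M$, a point the paper's proof leaves implicit.
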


\begin{proof}
If $U$ is normal in $G^*$, then $U^\alpha$ is normal as well. Next, the
nucleus $N$ is characteristic in $G^*$ and hence invariant under $\alpha$. 
Therefore $N U^\alpha = (NU)^\alpha = M^\alpha = M$ and thus $U^\alpha$ 
is a supplement to $N$ in $M$.
\end{proof}

\begin{theorem} \label{isomsol}
Let $U_1, U_2$ be two allowable subgroups of $G^*$. Then $G^*/U_1 \cong
G^*/U_2$ if and only if there exists $\alpha \in \autGM$ which maps
$U_1$ onto $U_2$.
\end{theorem}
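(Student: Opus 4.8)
The plan is to establish the nontrivial (``only if'') direction by lifting a given isomorphism $\phi\colon G^*/U_1 \to G^*/U_2$ to an automorphism $\alpha \in \autGM$ satisfying the strong compatibility $\pi_2 \circ \alpha = \phi \circ \pi_1$, where $\pi_i\colon G^* \to G^*/U_i$ denotes the natural projection with kernel $U_i$. Once such an $\alpha$ is in hand, comparing kernels gives $\alpha^{-1}(U_2) = \ker(\pi_2\circ\alpha) = \ker(\phi\circ\pi_1) = U_1$ (using injectivity of $\phi$), hence $\alpha(U_1)=U_2$, which is the claim. The converse direction is immediate: any $\alpha\in\autGM$ with $\alpha(U_1)=U_2$ descends to an isomorphism $G^*/U_1 \to G^*/U_2$.

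To build $\alpha$, I would first extract the automorphism of $G$ induced by $\phi$. Since $\nu_c$ is characteristic by Lemma~\ref{nu-properties1} and each $U_i$ is allowable, $\phi$ maps $\nu_c(G^*/U_1)=M/U_1$ onto $\nu_c(G^*/U_2)=M/U_2$. Writing $\varphi\colon G^*\to G$ for the natural map with kernel $M$ and $\kappa_i\colon G^*/U_i\to G$ for the induced epimorphisms (so that $\varphi=\kappa_i\circ\pi_i$ and $\ker\kappa_i=M/U_i$), the two epimorphisms $\kappa_1$ and $\kappa_2\circ\phi$ from $G^*/U_1$ onto $G$ share the kernel $M/U_1$, and therefore differ by a unique $\beta\in\Aut(G)$ with $\kappa_2\circ\phi=\beta\circ\kappa_1$.

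The heart of the argument is a lifting of generating sets along $\pi_2$. Set $g_i^*=f_iK(G)$ and $c_i:=\phi(\pi_1(g_i^*))$. Then $\{c_1,\dots,c_n\}$ generates $G^*/U_2$ and satisfies $\kappa_2(c_i)=\beta(g_i)$. Because $U_2\le M$ is an abelian normal subgroup of $G^*$ and $G^*$ is generated by the $n$ elements $g_i^*$, Gasch\"utz' theorem \cite{Gas54} yields elements $h_i\in G^*$ with $\pi_2(h_i)=c_i$ and $\langle h_1,\dots,h_n\rangle=G^*$; these automatically satisfy $\varphi(h_i)=\kappa_2(c_i)=\beta(g_i)$. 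Lifting the $h_i$ back to $F$ and repeating the construction from the proof of Theorem~\ref{natural}(a) gives an endomorphism of $F$ inducing $\beta$ on $G$, hence stabilizing $R$ and $L$ and therefore $K(G)=[R,L]R^k$; this descends to a surjective, and so (by finiteness) bijective, endomorphism $\alpha$ of $G^*$ with $\alpha(g_i^*)=h_i$. Since $\varphi\circ\alpha=\beta\circ\varphi$ forces $\alpha(M)=M$, we have $\alpha\in\autGM$, and since $\pi_2(\alpha(g_i^*))=c_i=\phi(\pi_1(g_i^*))$ on the generators $g_i^*$, the identity $\pi_2\circ\alpha=\phi\circ\pi_1$ follows.

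The main obstacle is precisely this generator-lifting step: one must produce a \emph{single} generating set $\{h_i\}$ of $G^*$ that simultaneously lifts the generating set $\{c_i\}$ of $G^*/U_2$ along $\pi_2$ and realizes $\beta$ on $G$ via $\varphi$. This is exactly what the abelian form of Gasch\"utz' theorem supplies, since $U_2\le M$ is abelian and normal and $G^*$ is $n$-generated; the two compatibility conditions $\pi_2(h_i)=c_i$ and $\varphi(h_i)=\beta(g_i)$ are consistent because $\varphi=\kappa_2\circ\pi_2$, and checking them is the routine bookkeeping that makes the resulting $\alpha$ both $M$-invariant and a genuine lift of $\phi$.
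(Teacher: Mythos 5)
Your proposal is correct and follows essentially the same route as the paper: both reduce the ``only if'' direction to lifting a generating set of $G^*/U_2$ (equivalently, of $F/V_2$) to a generating set of $G^*$ via Gasch\"utz' theorem, then descend the resulting endomorphism of $F$ through $K(G)=[R,L]R^k$ using that the induced map on $G$ preserves $R$ and $L$, obtaining $\alpha\in\autGM$ with $\pi_2\circ\alpha=\phi\circ\pi_1$ and hence $\alpha(U_1)=U_2$. The only cosmetic difference is that you perform the Gasch\"utz lifting inside the finite group $G^*$ and extract the induced automorphism $\beta\in\Aut(G)$ explicitly first, whereas the paper works directly in the free group $F$ and verifies $\sigma(R)\leq M$ and $\sigma(L)\leq L/K$ from the fact that the given isomorphism maps $\nu_c(G^*/U_1)=M/U_1$ onto $\nu_c(G^*/U_2)=M/U_2$.
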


\begin{proof}
If an $\alpha \in \autGM$ exists with $U_1^\alpha=U_2$, then it induces the 
isomorphism $G^*/U_1 \to G^*/U_2 : gU_1 \mapsto \alpha(g)U_2$. It remains to 
prove the converse. Let $\mu : F \to G : f_i \mapsto g_i$ be the presentation
epimorphism associated with $G^*$. Let $R = \ker(\mu)$ and let $K(G)$ be
the covering kernel so that $G^* = F/K(G)$. Let $\rho : F \to G^*$ be the
natural homomorphism onto the factor group $G^*$.

Let $\beta : G^*/U_1 \to G^*/U_2$ be an isomorphism. For $i=1,2$, let $V_i 
\leq F$ be the full preimage of $U_i$ under $\rho$. Then $G^*/U_i$ is 
naturally isomorphic to $F/V_i$ for $i = 1,2$ and thus $\beta$ induces an 
isomorphism $\hat{\beta} : F/V_1 \to F/V_2$. 

For each $1\leq i \leq n$ choose $y_i \in F$ with $\hat{\beta}(f_iV_1) = 
y_iV_2$. Thus  $G^*/U_2 \cong F/V_2 = \gen{ y_1V_2, \ldots, y_nV_2 }$.
As $U_2= V_2/K$ is finite and $G^*$ is finitely generated, 
we can apply \cite[Satz 1]{Gas55}. Thus there exist $z_1, \ldots, z_n \in F$ 
with $z_i V_2 = y_i V_2$ for $1 \leq i \leq n$ and $G^* = \gen{
z_1K, \ldots, z_nK }$. Now we define the group homomorphism
\[ \sigma : F \to G^* \text{ given by } \sigma(f_i) = z_i K 
\text{ for } 1 \leq i \leq n.\]
Then $\sigma$ is surjective by construction, and for all
$g\in F$ we have $\beta(\rho(g)U_1)=\sigma(g)U_2$. 
Therefore $\sigma(R)U_2 = \beta(\rho(R)U_1)=\beta(M/U_1)=M/U_2$,
since $\beta$ maps $M/U_1=\nu_c(G^*/U_1)$ to $M/U_2=\nu_c(G^*/U_2)$.
Hence $\sigma(R)\leq M=R/K$. Similarly, $\sigma$ maps $L$ into $L/K$.
This yields that $\sigma$ maps $K = [R,L]R^k$ into $K/K=1$,
hence $K\leq \ker\sigma$. Since $G^*=F/K$ is finite,  $\ker(\sigma) = K$.
Thus $\sigma$ induces the automorphism
\[ \alpha: G^* \to G^* \text{ given by } \alpha(f_iK) = z_iK
\text{ for } 1 \leq i \leq n.\]
By construction, $\alpha$ maps $M$ onto $M$ and thus is an element of
$\autGM$. Further, $\alpha$ maps $U_1$ onto $U_2$ and hence is an
automorphism of the desired form.
\end{proof}

Similar to Theorem \ref{isomsol} one can determine the automorphism group
of a descendant. The proof of the following theorem is a variation on the
proof of Theorem \ref{isomsol} and is thus omitted.

\begin{theorem} \label{auto_desc}
Let $H = G^*/U$ be an arbitrary descendant of $G$. Then there exists
the natural epimorphism:  
$\Stab_{\autGM}(U) \to \Aut(H): \alpha \mapsto \alpha_{G^*/U}$.
\end{theorem}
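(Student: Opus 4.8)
The plan is to mirror the structure of the proof of Theorem~\ref{isomsol}, since Theorem~\ref{auto_desc} is essentially the ``diagonal'' case where the two allowable subgroups coincide. First I would fix the presentation epimorphism $\mu : F \to G : f_i \mapsto g_i$ with $R = \ker(\mu)$ and covering kernel $K = K(G)$, so that $G^* = F/K$, and let $\rho : F \to G^*$ be the natural projection. Writing $V \leq F$ for the full preimage of $U$ under $\rho$, we have $H = G^*/U \cong F/V$ naturally. Given an automorphism $\beta \in \Aut(H)$, I would show it lifts to an element $\alpha \in \autGM$ with $\alpha(U) = U$, and that the resulting map $\alpha \mapsto \alpha_{G^*/U}$ (the automorphism of $H$ induced by $\alpha$) is the claimed epimorphism.

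The lifting argument proceeds exactly as in Theorem~\ref{isomsol}, but with $U_1 = U_2 = U$ and $V_1 = V_2 = V$. Given $\beta \in \Aut(H)$, it induces $\hat\beta : F/V \to F/V$; choosing $y_i \in F$ with $\hat\beta(f_iV) = y_iV$, one applies Gasch\"utz' lifting theorem \cite[Satz 1]{Gas55} to replace the $y_i$ by elements $z_i$ with $z_iV = y_iV$ and $G^* = \gen{z_1K, \ldots, z_nK}$. Defining $\sigma : F \to G^*$ by $\sigma(f_i) = z_iK$ gives a surjection, and the same computations as before show $\sigma(R) \leq M$, $\sigma(L) \leq L/K$, and hence $\sigma(K) = 1$, so $\ker(\sigma) = K$ and $\sigma$ induces an automorphism $\alpha \in \Aut(G^*)$ fixing $M$ setwise, i.e.\ $\alpha \in \autGM$. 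Since $\beta$ fixes $\nu_c(H) = M/U$, the construction forces $\alpha(U) = U$, so $\alpha \in \Stab_{\autGM}(U)$, and by construction the induced map on $G^*/U = H$ is $\beta$. This establishes surjectivity.

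What remains is to verify that the map $\Stab_{\autGM}(U) \to \Aut(H)$ sending $\alpha$ to $\alpha_{G^*/U}$ is a well-defined homomorphism. Well-definedness is immediate: if $\alpha$ fixes $U$ setwise, then $gU \mapsto \alpha(g)U$ is a well-defined bijective homomorphism of $G^*/U$, hence an automorphism of $H$; and composition of such induced maps agrees with composition of the $\alpha$'s, giving the homomorphism property. Combined with the surjectivity argument above, this yields the natural epimorphism.

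The main obstacle, as in Theorem~\ref{isomsol}, is the application of Gasch\"utz' theorem to adjust the generator preimages $y_i$ into a genuine generating set $z_i$ of $G^*$ while preserving their cosets modulo $V$; this is the step that requires $U$ to be finite and $G^*$ finitely generated, and it is what makes the lift land in $\autGM$ rather than merely inducing an endomorphism. I expect the bookkeeping to confirm $\alpha(U) = U$ (rather than just $\alpha(M) = M$) to be the only genuinely new point relative to the isomorphism proof, and it follows cleanly from the fact that $\beta$ preserves the characteristic subgroup $\nu_c(H) = M/U$.
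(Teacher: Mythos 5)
Your proposal is correct and matches the paper's intent: the paper omits the proof of Theorem~\ref{auto_desc}, stating only that it is a variation on the proof of Theorem~\ref{isomsol}, and your diagonal specialization ($U_1=U_2=U$) with the same Gasch\"utz lifting argument is exactly that variation. The only quibble is that $\alpha(U)=U$ already follows from $\sigma(V)U=\beta(U)=U$ (the identity coset), rather than from $\beta$ preserving $\nu_c(H)$, which is instead what you need for $\sigma(R)\leq M$; this does not affect correctness.
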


\subsection{Algorithm II for solvable groups}
\label{algoII}

In this section we summarize our method to determine the descendants
for a finite solvable group. The main reason for restricting this
part of the algorithm to finite solvable groups is that this type of
groups can be defined by a polycyclic presentation and this, in
turn, allows effective computations with the considered groups.

{\bf Algorithm} {\it Descendants( $G$ )} \vspace{-0.3cm}
\begin{itemize}
\item
Determine $G^*$ with multiplicator $M$ and nucleus $N$, 
see Section \ref{sec_cover}.
\item
Determine the set $\cL$ of allowable subgroups of $G^*$.
\item
If $|\cL| \leq 1$, then return $\{ G^*/U \mid U \in \cL \}$
and $\{ \Aut(G^*/U) \mid U \in \cL \}$.
\item
Determine $\autGM$ from $\Aut(G)$, see Section \ref{sec_autos}.
\item
Determine orbits and stabilizers for the action of $\autGM$ on $\cL$.
\item
For each orbit representative $U$ determine $H = G^*/U$ and $\Aut(H)$,
see Theorem \ref{auto_desc}.
\item
Return the resulting list of descendants $H$ with their automorphism
groups.
\end{itemize}

In the following, we discuss improvements to the Algorithm {\it Descendants}.
Recall that the multiplicator $M$ is a direct product of elementary abelian 
groups $M = M_{p_1} \times \cdots \times M_{p_r}$. A major improvement can be
obtained by reducing as many computations as possible to each of the direct
factors $M_{p_i}$ and use linear algebra to compute with each such factor.

The factorisation of $M$ induces that the nucleus $N$ and each allowable 
subgroup $U$ splits as well into a similar direct product and $U_{p_i}$ is 
a supplement to $N_{p_i}$ in $M_{p_i}$. Thus if $\cL_{p_i}$ is the set of 
proper supplements to $N_{p_i}$ in $M_{p_i}$, then it follows that 
\[ \cL = \{ U_1 \times \cdots \times U_r 
         \mid U_i \in \cL_{p_i} \text{ for } 1 \leq i \leq r \}.\]

Let $\eta_M : \autGM \to \Aut(M) : \alpha \mapsto \alpha|_M$ the homomorphism 
induced by the action of $\autGM$ on $M$. Then all except the last step of 
Algorithm {\it Descendants} uses as acting group $\Gamma = \im(\eta_M) \leq 
\Aut(M)$ only and not $\autGM$ itself. We note that 
\[ \Aut(M) = \Aut(M_{p_1}) \times \cdots \times \Aut(M_{p_r}), \mbox{ and }\]
\[ \Aut(M_{p_i}) \cong \GL(e_i, p_i) \text{ for } 1 \leq i \leq r \]
and thus $\Gamma$ is a subgroup of this direct product. 
Let $\Gamma_i = \Gamma \cap \Aut(M_{p_i})$ for $1 \leq i \leq r$. Then
\[ \Delta := \Gamma_1 \times \ldots \times \Gamma_r \leq \Gamma.\]

The computation of orbits under the action of the direct product $\Delta$
splits into the computation of the orbits in each direct factor. Hence,
instead of one computation with a long orbit, we obtain $r$ orbit 
computations with shorter orbits. This induces a significant reduction
for Algorithm {\it Descendants}.

However, the examples in Section \ref{examII} show that $\Delta$ can be 
a proper subgroup of $\Gamma$ and $\Gamma$ can act non-trivially on 
$\Delta$-orbits. Thus we cannot reduce the computation of descendants
to the Sylow subgroups of $M$ completely. However, we determine and
use the subgroup $\Delta$ of $\Gamma$ to reduce the arising orbit 
computations. For this purpose it is useful to observe that the image 
of $W$ in $\Aut(M)$ is a subgroup of $\Delta$, see Section \ref{sec_autos}.

\subsection{Examples}
\label{examII}

Let $G_1$ be the symmetric group on four points, $G_2$ the group $(36,3)$
from the small groups library and $G_3$ the group $(72, 22)$. The following
table lists the orders of the multiplicators and nucleuses of these groups
with respect to minimal generating sets of the underlying groups. The 
table then exhibits the descendants of these groups. In the cases of $G_1$ 
and $G_2$ we list the descendants explicitly by their number in the small 
groups library. In the case of $G_3$ we describe them by their stepsizes
$o$, where the stepsize of a descendant $H$ is $o = |H| / |G_3| = |H|/72$.
Note that an entry $o^x$ means that there are $x$ descendants of stepsize $o$.

\begin{center}
\begin{tabular}{llll}
group & $|M|$ & $|N|$ & descendants \\
\hline
$G_1$ & $2^8$ & $2^3$ 
   & (48, 28), (48, 29), (96, 64), (192, 180), (192,181) \\ 
\hline
$G_2$ & $2^5 \cdot 3^3$ & $2^3 \cdot 3$ 
   & (72, 3), (144, 3), (288, 3), (108, 3), (216, 3), (432,3), (864, 3) \\
\hline
$G_3$ & $2^5 \cdot 3^6$ & $2 \cdot 3^3$ 
   & $2^3, 3^2, 6^7, 9^2, 18^7, 27^1, 54^3$ 
\end{tabular}
\end{center}

For $G_2$ and $G_3$ the multiplicators are direct products of their
Sylow 2- and 3-subgroups. We note that in the notation of Section
\ref{algoII}, the group $\Delta$ is a proper subgroup of $\Gamma$ in
both cases. This has an impact on the descendant computation in the
case of $G_3$: there are three descendants of stepsize $2$, two descendants 
of stepsize $3$, but $7 > 2 \cdot 3$ descendants of stepsize $2 \cdot 3$. 
This shows that the orbit-stabilizer computation used in Algorithm
{\it Descendants} does not fully reduce to the Sylow subgroups of the
multiplicator.

\section{Implementation}
\label{impl}

We have implemented our methods to construct finite solvable groups
as GAP Package GroupExt \cite{groupext}. Our implementation uses
a variety of algorithms from GAP and other packages. In particular,
it uses the following.

\begin{itemize}
\item
The FGA package \cite{FGA} for computations in free groups.
\item
The Polycyclic package \cite{polycyc} for computations in polycyclic
groups.
\item
The AutPGroup package \cite{autpgrp} to determine the automorphism 
groups of finite $p$-groups.
\item
The genss package \cite{genss} for computing stabilizers using a
randomised Schreier-Sims algorithm.
\end{itemize}

\section{The groups of a given order}
\label{applics}

Our implementations of Algorithms I and II for solvable groups can be 
used to determine the solvable non-nilpotent groups of a given order $o$. 
By induction, we assume that all solvable non-nilpotent groups of order 
properly dividing $o$ are given. We then proceed in two steps:

\begin{itemize}
\item[(1)]
Determine all solvable non-nilpotent groups of order $o$ and $F$-class $1$ 
up to isomorphism; For this purpose consider every proper divisor $\ell$ of 
$o$ and determine the solvable groups of order $o$, $F$-class $1$ and
$F$-rank $\ell$.
\item[(2)]
For every solvable non-nilpotent group $G$ of order $s$, where $s$ 
is a proper divisor of $o$, determine the descendants of $G$ of order 
$o$.
\end{itemize}

We have used this to determine (again) all solvable non-nilpotent groups
of order at most $2000$ as available in the small groups library. This
provides the first independent check for the correctness of the Small
Groups library.

\subsection{The groups of order 2304}

We have used the algorithm described in this paper to determine (for the 
first time) the groups
of order $2304 = 2^8 \cdot 3^2$. We provide some summary information on
these groups in this section. The groups themselves will be made
available as part of a forthcoming GAP package \cite{smallext}.

The nilpotent groups of order $2304$ can be obtained readily as direct
products of $p$-groups and thus they can be considered as availble; There 
are $112184$ of them. In the following we concentrate on the non-nilpotent 
groups of order $2304$. Every such group is solvable by Burnside's 
$pq$-Theorem.

We first consider the non-nilpotent groups of $F$-class $1$. Table
\ref{table:fclass1} lists their possible $F$-ranks $\ell$ and for each
$\ell$ the number of solvable groups of order $2304$, $F$-class $1$
and $F$-rank $\ell$. We omit those divisors $\ell$ of $o$ which do not
lead to any groups. In summary, there are 1953 non-nilpotent groups of
order $2304$ of $F$-class $1$. Note that $C_2^8\times C_3^2$ has order
$2304$, $F$-class $1$ and $F$-rank $2304$ but is abelian and hence not
included.

\begin{center}
\(
\begin{array}{D{=}{\;=\;}{5}|r}
\multicolumn{1}{c}{$F$\text{-central rank}} & \text{\# of groups} \\
\hline
32 = 2^5           & 8 \\
64 = 2^6           & 37 \\
128 = 2^7          & 28 \\
144 = 2^4\cdot 3^2 & 193 \\
192 = 2^6\cdot 3   & 208 \\
256 = 2^8 & 9 \\
288 = 2^5\cdot 3^2 & 834 \\
384 = 2^7\cdot 3 & 54 \\
576 = 2^6\cdot 3^2 & 558 \\
768 = 2^8\cdot 3 & 8 \\
1134 = 2^7\cdot 3^2 & 16 \\
\end{array}
\)
\captionof{table}{Groups of F-central class 1}
\label{table:fclass1}
\end{center}

Next we consider the groups of $F$-class $c > 1$. To determine these, 
we have to consider each group of order properly dividing $2304$ and 
determine descendants of order $2304$. As a preliminary step, it is
useful to determine {\it a priori} which groups of a given divisor
$s$ of $o$ could have descendants of order $o$. The following lemma
is highly useful for this purpose.

\begin{lemma}
 Let $G$ be a non-nilpotent group of order $2^a \cdot 3$ such that $G$ has
 descendants of order $2^a \cdot 9$. Then $F(G)\cong C_2^{a-1}\times C_3$,
 and $G/O_2(G)\cong S_3$.
\end{lemma}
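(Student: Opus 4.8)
The plan is to extract strong structural constraints on $G$ purely from the existence of a descendant $H$ of order $2^a\cdot 9$, working throughout with the $F$-central series and the Fitting subgroup. Since $H/\nu_c(H)\cong G$ and $|H|/|G|=3$, the bottom layer $\nu_c(H)=\ker(H\twoheadrightarrow G)$ is cyclic of order $3$. First I would locate this layer inside $F(H)$: as $G$ has $F$-class $c\ge 1$, the descendant $H$ has $F$-class $c+1\ge 2$, so $\nu_c(H)\le\nu_1(H)=\Frat(F(H))$ by Lemma~\ref{nu-properties1}(a). Hence the $3$-part of $\Frat(F(H))$ is nontrivial, which forces the Sylow $3$-subgroup $O_3(H)=O_3(F(H))$ to be a non-elementary-abelian $3$-group of order dividing $9$; that is, $O_3(H)\cong C_9$ with $\Frat(O_3(H))=\nu_c(H)$.

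The crux of the argument is to pin down $c$, and I expect this to be the main obstacle. Here I would compute the $3$-part of the $F$-central series of $H$ using Lemma~\ref{altchar}. Writing $F(H)=O_2(H)\times O_3(H)$ with $O_3(H)\cong C_9$ abelian, the $3$-part of $\nu_0(H)$ is $C_9$ and that of $\nu_1(H)$ is $\Frat(C_9)\cong C_3$. Since commutators across coprime Sylow factors vanish and $C_9$ is abelian, $[F(H),\nu_1(H)]$ has trivial $3$-part, while $(\nu_1(H))^{k}$ annihilates the $3$-part because $3$ divides the core $k$ of $|F(H)|$; thus the $3$-part of $\nu_i(H)$ is trivial for all $i\ge 2$. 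Consequently the only level whose $3$-part equals $C_3$ is level $1$, and because $\nu_c(H)\cong C_3$ has $3$-part exactly $C_3$, this leaves no room except $c=1$. This is precisely where the coprime structure and the abelianness of the extended $3$-part conspire to fix the $F$-class.

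Once $c=1$ is known, the first conclusion follows quickly: the $F$-class of $G$ is $1$, so $\nu_1(G)=\{1\}$, and since $\nu_1(G)=\Frat(F(G))$ by Lemma~\ref{nu-properties1}(a), the Fitting subgroup $F(G)$ has trivial Frattini subgroup and is therefore a direct product of elementary abelian groups. Transporting $O_3(H)\cong C_9$ down through the isomorphism $F(H)/\nu_c(H)\cong F(G)$ (Lemma~\ref{nu-properties1}(c)) gives $O_3(G)\cong C_9/C_3\cong C_3$; together with $|G|_3=3$ this shows $O_3(G)=C_3$ and $F(G)=O_2(G)\times C_3$ with $O_2(G)$ elementary abelian.

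It then remains to fix the order of the $2$-part and identify the quotient. I would argue that $G$ acts nontrivially on $O_3(G)=C_3$: otherwise $C_3$ is central, $G/C_3$ is a $2$-group, and $G$ is nilpotent, contrary to hypothesis. Hence $K:=C_G(C_3)$ is a normal subgroup of index $2$; as $C_3\le Z(K)$ and $K/C_3$ is a $2$-group, $K/Z(K)$ is a $2$-group and $K$ is nilpotent. Being normal and nilpotent, $K\le F(G)\lneq G$, which forces $F(G)=K$ of order $2^{a-1}\cdot 3$, so $|O_2(G)|=2^{a-1}$ and $F(G)\cong C_2^{a-1}\times C_3$. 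Finally $G/O_2(G)$ has order $6$, its normal subgroup $F(G)/O_2(G)\cong C_3$ is inverted by the complementary involution (the $G$-action on $C_3$ has kernel $F(G)\supseteq O_2(G)$), so $G/O_2(G)$ is nonabelian of order $6$, i.e.\ $G/O_2(G)\cong S_3$.
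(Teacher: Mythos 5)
Your proof is correct, and it reaches the same three structural facts as the paper's proof --- $O_3(G)\cong C_3$ is normal, $G$ has $F$-class $1$ (so $F(G)$ is a direct product of elementary abelian groups), and $\lvert O_2(G)\rvert=2^{a-1}$ with $G/O_2(G)\cong S_3$ --- but by partly different means. The paper first deduces $3\mid\lvert F(G)\rvert$ from the invariance of the $F$-rank under passing to descendants, then obtains $G/O_2(G)\cong S_3$ and $\lvert O_2(G)\rvert=2^{a-1}$ by counting Sylow $2$-subgroups ($n_2=3$) and letting $G$ act on them by conjugation, and only at the end invokes the descendant $H$ to force $F$-class $1$. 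You instead start from $H$: your explicit computation of the $3$-part of $\nu_i(H)$ via Lemma~\ref{altchar} (giving $C_9$, $C_3$, then trivial for $i\ge 2$) pins down $c=1$ more carefully than the paper's brief ``the only way this is possible'' remark, and you then replace the Sylow-counting step by the observation that $C_G(O_3(G))$ is a nilpotent normal subgroup of index $2$, hence equals $F(G)$. Both routes are elementary; yours makes the $F$-class argument more airtight and avoids Sylow's theorem entirely, while the paper's permutation argument has the small advantage of identifying $O_2(G)$ concretely as the kernel of the action on the three Sylow $2$-subgroups. One cosmetic point: you assert $c\ge 1$ without comment; this does need the (one-line) justification that $G$ is a nontrivial solvable group, so Lemma~\ref{nu-properties2}(c) applies.
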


\begin{proof}
 Since $G$ has a descendant of order $3\cdot \abs{G}$, its $F$-central
 rank is divisible by $3$, hence $\abs{F(G)}$ is divisible by $3$. But then
 $F(G)$ contains a $3$-Sylow subgroup $P_3\cong C_3$ of $G$. Since
 $F(G)$ is nilpotent, it splits into a direct product of $P_3$ and a
 $2$-group. Since $F(G)$ is normal in $G$, also $P_3$ is normal in $G$.
 Hence $O_3(G)=P_3\cong C_3$.

 By Sylow's theorem, the number of $2$-Sylow subgroups in $G$ is
 one or three. If there was only one, then it would be normal and $G$
 would be nilpotent. Hence there are three and the conjugation action of
 $G$ on the set of its $2$-Sylow subgroups yields a homomorphism into
 $S_3$ whose kernel is a normal $2$-subgroup of $G$. Since the kernel is
 not a full 2-Sylow subgroup (else $G$ would be nilpotent),
 it must have index 6 in $G$ and thus order $2^{a-1}$. Clearly, this
 kernel coincides with $O_2(G)$. Therefore $G/O_2(G)\cong S_3$.

 It remains to show that $O_2(G)\cong C_2^{a-1}$. For this, suppose that $H$
 is a descendant of $G$ of order $2^a \cdot 9$ and $F$-central class $c$. Thus
 $H/\nu_c(H)\cong G$, hence $\nu_c(H)\cong C_3$. Since the $F$-central
 class of $H$ is larger than that of $G$, the only way this is possible
 is if $c=2$ and the preimage of $P_3$ in $H$ is isomorphic to $C_9$.
 But then $G$ has $F$-central class 1 and $F(G)$ is a direct product of
 elementary abelian subgroups.
\end{proof}
We can apply this to the $1\,090\,235$ groups of order $768=2^7\cdot 3$ to
determine those which may have descendants of order $2304$. It turns out
that only 8 groups remain, each having precisely one descendant of the
order $2304$.

A summary of the descendant computation is given in Table 
\ref{table:fdesc}.

\begin{center}
\begin{tabular}{r|r|r|r|r}
order & \# groups & \# non-nilpotent
    & \# grps w/ descendants & \# descendants \\
\hline
  6 &      2 & 1 & 0 & 0\\
 12 &      5 & 3 & 0 & 0\\
 18 &      5 & 3 & 0 & 0\\
 24 &     15 & 10 & 0 & 0\\
 36 &     14 & 10 & 0 & 0\\
 48 &     52 & 38 & 4 & 34\,210\\
 72 &     50 & 40 & 2 & 6\\
 96 &    231 & 180 & 5 & 728\,926\\
144 &    197 & 169 & 21 & 68\,945\\
192 &  1\,543 & 1\,276 & 6 & 24\,889\\
288 &  1\,045 & 943 & 116 & 10\,835\,672 \\
384 & 20\,169 & 17\,841 & 7 & 426 \\
576 &  8\,681 & 8\,147 & 865 & 1\,980\,937\\
768 & 1\,090\,235 & 1\,034\,143 & 8 & 8\\
1152 &  157\,877 & 153\,221 & 47\,848 & 1\,967\,974\\
\hline
& 1\,280\,121 & 1\,216\,025 & 48\,882 & 15\,641\,993\\
\end{tabular}
\captionof{table}{Groups of order $2304$ and $F$-class $c>1$.}
\label{table:fdesc}
\end{center}

In total there are $112\,184+1\,953+15\,641\,993=15\,756\,130$ isomorphism 
types of groups of order 2304. In comparison, there are $10\,494\,213$ groups 
of order $512$.

Table \ref{table:top-ten} contains the top-ten among the groups of order
properly dividing $o$; that is, those groups with the ten largest
numbers of descendants. It is interesting to note that over 56\% of
the groups of order 2304 are descendants of a single group.

\begin{center}
\[
\begin{array}{D{,}{,\;}{5}|r}
\multicolumn{1}{c}{\text{group}} & \text{descendants} \\
\hline 
(288,1040) & 8\,937\,790\\
(576,8590) & 707\,578 \\
(96,230) & 696\,554 \\
(288,1043) & 696\,554 \\
(288,1044) & 696\,554 \\
(576,8588) & 203\,006 \\
(288,976) & 160\,928 \\
(576,8582) & 131\,664 \\
(576,8675) & 120\,310 \\
(576,8589) & 110\,292 \\
\end{array}
\]
\captionof{table}{Top ten groups in terms of number of descendants.}
\label{table:top-ten}
\end{center}

\bibliographystyle{abbrv}
\bibliography{/home/beick/tex/bibdata/all.bib}
\end{document}